\documentclass[twoside,leqno,twocolumn]{article}
\usepackage{ltexpprt}
\usepackage{sodafixes}

\usepackage[utf8]{inputenc}
\usepackage{amssymb}
\usepackage{authblk}
\usepackage{enumerate}
\usepackage{mathtools}

\newcommand{\R}{\mathbb{R}}
\newcommand{\Q}{\mathbb{Q}}
\newcommand{\Z}{\mathbb{Z}}

\newcommand{\opt}{{\min}} 
\def\ve#1{{\mathchoice{\mbox{\boldmath$\displaystyle\bf#1$}}
{\mbox{\boldmath$\textstyle\bf#1$}}
{\mbox{\boldmath$\scriptstyle\bf#1$}}
{\mbox{\boldmath$\scriptscriptstyle\bf#1$}}}}

\DeclareMathOperator    \intr      {int}
\DeclareMathOperator    \rec       {rec}
\DeclareMathOperator    \sign      {sign}

\title{An FPTAS for Minimizing Indefinite Quadratic Forms over Integers in Polyhedra}

\author[1]{Robert Hildebrand\thanks{robert.hildebrand@ifor.math.ethz.ch}}
\author[1]{Robert Weismantel\thanks{robert.weismantel@ifor.math.ethz.ch}}
\author[1]{Kevin Zemmer\thanks{kevin.zemmer@ifor.math.ethz.ch}}
\affil[1]{Department of Mathematics, IFOR, ETH Zürich}

\date{October 10, 2015}

\begin{document} 
\maketitle
\begin{abstract}
We present a generic approach that allows us to develop a fully po\-ly\-no\-mial-time approximation scheme (FTPAS) for minimizing nonlinear functions over the integer points in a rational polyhedron in fixed dimension. The approach combines the subdivision strategy of Papadimitriou and Yannakakis \cite{papadimitriou_approximability_2000} with ideas similar to those commonly used to derive real algebraic certificates of positivity for polynomials. Our general approach is widely applicable. We apply it, for instance, to the Motzkin polynomial and to indefinite quadratic forms $\ve x^T Q \ve x$ in a fixed number of variables, where $Q$ has at most one positive, or at most one negative eigenvalue. In dimension three, this leads to an FPTAS for general $Q$.
\end{abstract}

\section{Introduction}

Consider the problem
\begin{equation}
\label{eq:problem-general}
\min \{ f(\ve x) : \ve x \in P \cap \Z^n\}
\end{equation}
for $f \colon \R^n \to \R$, $P = \{ \ve x \in \R^n \,:\, A \ve x \le \ve b \}$,  $A \in \Z^{m \times n}$, and $\ve b \in \Z^m$.
We use the words \emph{size} and \emph{binary encoding length} synonymously.
The size of $P$ is the sum of the sizes of $A$ and $\ve b$.
We say that Problem~\eqref{eq:problem-general} can be solved in polynomial time if in time bounded by a polynomial in the size of its input, we can either determine that the problem is infeasible, find a feasible minimizer, or show that the problem is unbounded by exhibiting a feasible point $\ve{\bar x}$ and an integer ray $\ve{\bar r} \in \rec(P) := \{ \ve x\in \R^n : A \ve x \leq 0\}$ such that $f(\ve{\bar x} + \lambda \ve{\bar r}) \to -\infty$ as $\lambda \to \infty$.

The main focus of this paper is Problem~\eqref{eq:problem-general} with $f(\ve x) = \ve x^T Q \ve x$, where $Q \in \Z^{n \times n}$ is a symmetric matrix.
Note that if $Q$ is not symmetric, then we can replace it by $Q' = \frac{1}{2} Q + \frac{1}{2} Q^T$, which is symmetric and satisfies $\ve x^T Q \ve x = \ve x^T Q' \ve x$.
The input size of Problem~\eqref{eq:problem-general} with $f(\ve x) = \ve x^T Q \ve x$ is the sum of the sizes of $P$ and $Q$.
When $Q$ is positive semi-definite, $f(\ve x)$ is convex, whereas it is concave when $Q$ is negative semi-definite. In the former case, Problem~\eqref{eq:problem-general} with fixed $n$ and bounded $P$ can be solved in polynomial time by \cite{khackiyan_integer_2000}, whereas in the latter case by \cite{cook-hartmann-kannan-mcdiarmid-1992} (cf.\ Theorem~\ref{thm:convex-concave} below).

The computational complexity of mixed-integer po\-ly\-nomial optimization in fixed dimension was surveyed in \cite{loera_integer_2006, loera_mixed_integer_2008}, and they develop an FPTAS for maximizing non-negative polynomials over integer and mixed-integer points in a polytope, respectively.
They also use a weaker notion of approximation algorithm, that has also been used in~\cite{vavasis_approximation_1992},  called  \emph{fully polynomial-time weak-approximation scheme}.  It allows the accuracy to depend on the maximum and minimum values of $f$ on the feasible set.  That is, let $f_{\max}$ and $f_{\min}$ be the maximum and minimum values of $f$ on the feasible region.  Then a weak $\epsilon$-approximation $\ve x_\epsilon$ of the minimization problem satisfies
$$
| f(\ve x_\epsilon) - f_{\min} | \leq \epsilon (f_{\max} - f_{\min}). 
$$

When $n \le 2$ and $f$ is a polynomial of degree $d$, it has been shown in \cite{delpia_integer_2013} and \cite{delpia_minimizing_2015} that Problem~\eqref{eq:problem-general} can be solved in polynomial time for $d \le 3$ and in \cite{loera_integer_2006} that it is NP-hard for $d = 4$.
To the best of our knowledge, the complexity of Problem~\eqref{eq:problem-general} for fixed $n \ge 3$ and when $f$ is a polynomial of degree $d=2,3$ is still unknown.
 
We present an FPTAS for Problem~\eqref{eq:problem-general} with $f(\ve x) = \ve x^T Q \ve x$ and fixed $n$ in two cases: For $Q$ with at most one negative eigenvalue, and for $Q$ with at most one positive eigenvalue.
The numbers of positive, negative, and zero eigenvalues of $Q$ from a triple known as the \emph{inertia} of $Q$ and can be computed in  polynomial time (see \cite{bunch_partial_1974} or \cite{hartung_computational_2007}).

By \cite{delpia_mixed_2014}, for fixed $n$, it is possible to decide in polynomial time if Problem~\eqref{eq:problem-general} with $f(\ve x) = \ve x^T Q \ve x$ is bounded or not. If it is unbounded, \cite{delpia_mixed_2014} gives a certificate of unboundedness, whereas if it is bounded, it returns an upper bound $\tau$ on the size of an optimal solution that is polynomial in the input size.
Thus we can replace $P$ by $P \cap \{ \ve x \in \R^n \,:\, \| \ve x \|_\infty \le 2^{\tau} \}$.
Henceforth we assume that $P$ is bounded.

We use a notion of approximation that is common in combinatorial optimization and is akin to the maximization version used in~\cite{loera_integer_2006, loera_mixed_integer_2008} for maximizing non-negative polynomials over polyhedra, except here, we extend the notion to allow for the objective to take negative values.

\begin{Definition}
Let $\ve x_\opt$ be an optimal solution to Problem~\eqref{eq:problem-general} and let $\epsilon > 0$.
We say that $\ve x_\epsilon$ is an \emph{$\epsilon$-approximate solution} to Problem~\eqref{eq:problem-general} if $\ve x_\epsilon$ is feasible and one of the following hold:
\begin{enumerate}
\item $f(\ve x_\opt) > 0$ and $f(\ve x_\epsilon) \leq (1+\epsilon) f(\ve x_\opt)$,
\item $f(\ve x_\opt) < 0$ and $f(\ve x_\epsilon) \leq \frac{1}{1 + \epsilon} f(\ve x_\opt)$, 
\item $f(\ve x_\opt) = 0$ and $f(\ve x_\epsilon) = f(\ve x_\opt)$.
\end{enumerate}

We say an algorithm for Problem~\eqref{eq:problem-general} is a \emph{fully polynomial-time approximation scheme} if for any $\epsilon > 0$, in polynomial time in $\tfrac{1}{\epsilon}$ and the size of the input, the algorithm correctly determines whether the problem is feasible, and if it is, outputs an $\epsilon$-approximate solution $\ve x_\epsilon$.
\end{Definition}

In order to develop an FPTAS for classes of nonlinear functions to be
minimized over integer points in polyhedra, we propose a framework
that combines the techniques of Papadimitriou and Yannakakis~\cite{papadimitriou_approximability_2000} with ideas similar to those commonly used to derive certificates of positivity for polynomials over semialgebraic sets.
Generally speaking, in the latter context one is given a finite
number of ``basic polynomials'' $f_1,\ldots,f_m$ which are known to be
positive over the integers in a polyhedron $P$. A sufficient condition
to prove that another polynomial $f$ is positive over $P \cap \Z^n$ is
to find a decomposition of $f$ as a sum of products of a \emph{sum of squares} (SOS) polynomial
and a basic function $f_i$. A polynomial $p(\ve x)$ is SOS if there exist polynomials $q_1(\ve x), \ldots, q_m(\ve x)$  such that $p(\ve x) = \sum_{i = 1}^m q_i^2(\ve x)$.

We would like to use a similar approach to arrive at an FPTAS. 
Again we work with classes of ``basic functions''.  Then, for a given $f$, we try to detect a  decomposition of $f$ as a
finite sum of products of a so-called ``sliceable function'' and a basic
function $f_i$. Roughly speaking, sliceable functions --- thanks to the 
result of~\cite{papadimitriou_approximability_2000} --- can be approximated by subdividing the given
polyhedron. 

For instance, the  set
of all convex functions presented by a first order oracle that are nonnegative over $P \cap \Z^n$ could
serve as a class of basic functions, because we can solve Problem~\eqref{eq:problem-general} for any
member in the class in polynomial time when $n$ is
fixed.
The nonnegativity assumption implies sign-compatibility, which is a necessary property of the set of basic functions that will be introduced in Section~\ref{sec:sliceable}.
Another example is the set of all concave functions presented by an evaluation oracle that are
nonnegative over $P \cap \Z^n$. The same property holds true in this case.
These two examples demonstrate that we consider not only polynomials
$f_i$, but also more general classes of basic functions. In fact, this is key
to tackle the quadratic optimization problem.  For example, we can decompose the polynomial $x^2 + y^2 - z^2$ as the product of two non-polynomial functions: a basic function $\sqrt{x^2 + y^2} - |z|$ and a sliceable function $\sqrt{x^2 + y^2} + |z|$.  Our technique also applies, for instance, to the Motzkin polynomial, but to functions $f$ that are not polynomials as well (see Theorem~\ref{thm:sliceable-product-sum}).

We postpone the discussion of what we mean by basic and sliceable
functions to Section~\ref{sec:sliceable}.
As a consequence of our technique we easily derive the following result
that in the optimization community was an open question for quite several years.

\begin{theorem}
\label{thm:main}
Let $Q \in \Z^{n \times n}$ be a symmetric matrix and let $n$ be fixed. Then there is an FPTAS for Problem~\eqref{eq:problem-general} with $f(\ve x) = \ve x^T Q \ve x$ in the following cases:
\begin{enumerate}[(i)]
\item $Q$ has at most one negative eigenvalue;
\label{itm:main_min}
\item $Q$ has at most one positive eigenvalue.
\label{itm:main_max}
\end{enumerate}
\end{theorem}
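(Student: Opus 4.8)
The plan is to derive Theorem~\ref{thm:main} from the general ``sliceable times basic function'' machinery of Section~\ref{sec:sliceable} (Theorem~\ref{thm:sliceable-product-sum}) by exhibiting, for a quadratic form with a single wrong-sign eigenvalue, a factorization $\ve x^TQ\ve x=\sigma(\ve x)\,\beta(\ve x)$ with $\sigma\ge 0$ sliceable and $\beta$ basic. First I would clear the easy reductions. By \cite{delpia_mixed_2014}, as recalled above, we may assume $P$ is bounded with $\|\ve x\|_\infty\le 2^\tau$ on $P$ for a $\tau$ polynomial in the input size, so $f(\ve x)=\ve x^TQ\ve x$ is an integer lying in $[-M,M]$ for every feasible $\ve x$, with $M$ of polynomially bounded encoding length; emptiness of $P\cap\Z^n$ is decided in polynomial time in fixed dimension; and the inertia $(n_+,n_-,n_0)$ of $Q$ is computed in polynomial time. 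If $Q\succeq 0$ (case~(i) with $n_-=0$) then $f$ is convex and the problem is solved exactly by Theorem~\ref{thm:convex-concave}, and dually in the negative semidefinite case. So we may assume $n_-=1$ in case~(i) and $n_+=1$ in case~(ii).

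Second, by a polynomial-time rational diagonalization of $\ve x^TQ\ve x$ (Lagrange's method) I would compute rational positive semidefinite matrices $Q_+,Q_-$ with $Q=Q_+-Q_-$ and $\operatorname{rank}Q_\pm=n_\pm$; in case~(i) this means $Q_-=\mu\,\ve c\ve c^T$ with $\mu\in\Q_{>0}$, $\ve c\in\Q^n$, and in case~(ii) it means $Q_+=\mu\,\ve c\ve c^T$. Writing $g_\pm(\ve x):=\sqrt{\ve x^TQ_\pm\ve x}$, a seminorm of $\ve x$ and hence a nonnegative convex function, we obtain
\[
  \ve x^TQ\ve x=g_+(\ve x)^2-g_-(\ve x)^2=\bigl(g_+(\ve x)+g_-(\ve x)\bigr)\bigl(g_+(\ve x)-g_-(\ve x)\bigr),
\]
and I would set $\sigma:=g_++g_-$ and $\beta:=g_+-g_-$. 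Since $\sigma\ge 0$, the one-term decomposition $f=\sigma\cdot\beta$ is sign-compatible. It remains to check that $\sigma$ is sliceable and $\beta$ is basic. Slicing geometrically on the value of $\ve c^T\ve x$, the rank-one factor is pinned to within a $(1+\delta)$-factor on each slice, turning $f$ into a convex function on every slice in case~(i) and into a concave function on every slice in case~(ii) (in each case $g_{\text{rank-}(n-1)}^2$ minus a near-constant); and $\beta$ agrees with the convex function $g_+(\ve x)\mp\sqrt\mu\,\ve c^T\ve x$ on the halfspaces $\{\pm\ve c^T\ve x\ge 0\}$ in case~(i), respectively with the concave function $\pm\sqrt\mu\,\ve c^T\ve x-g_-(\ve x)$ in case~(ii), so minimizing $\beta$ over $P\cap\Z^n$, or over any slice (again a polytope in fixed dimension), splits into two convex, respectively concave, integer minimization problems, each solved in polynomial time by Theorem~\ref{thm:convex-concave}. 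Feeding this into Theorem~\ref{thm:sliceable-product-sum} yields the FPTAS; and since for $n=3$ every symmetric $Q$ has $n_+\le 1$ or $n_-\le 1$, this also gives an FPTAS for arbitrary $Q$ in dimension three.

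I expect the genuine difficulty to be the regime $f(\ve x_\opt)\approx 0$, where geometric subdivision is weakest: on a slice on which $g_+$ and $g_-$ are pinned to roughly the same value, $f=g_+^2-g_-^2$ can sweep through an entire interval around $0$ whose width is proportional to the square of the (possibly exponentially large) slice value, so no polynomially fine geometric grid can by itself pin down the sign of $f_\opt$ multiplicatively. The way around this, which the framework of Section~\ref{sec:sliceable} is designed to automate, combines two facts. First, $f$ is integer-valued on $P\cap\Z^n$, so $|f_\opt|\ge 1$ once $f_\opt\ne 0$, and a grid fine enough to resolve integer values --- still only polynomially fine once $|f_\opt|$ is known to lie below a polynomial threshold --- rounds correctly. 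Second, the relevant threshold tests are tractable: deciding whether $f_\opt\le 0$ amounts to deciding whether, on one of the two halfspaces $\{\pm\ve c^T\ve x\ge 0\}$, a convex (case~(i)) or concave (case~(ii)) function minimized over the corresponding integer points of $P$ attains a nonpositive value, and a minimizer realizing a nonpositive value is a feasible point with $\ve x^TQ\ve x\le 0$, hence with $\ve x^TQ\ve x=0$ when $f_\opt=0$. Organizing the argument so that all of this bookkeeping is discharged by a single application of Theorem~\ref{thm:sliceable-product-sum} --- leaving only the factorization and the verification of the sliceable and basic properties above --- is the heart of the proof; case~(ii) runs in parallel with case~(i), with ``convex'' replaced by ``concave'' and the convex-minimization half of Theorem~\ref{thm:convex-concave} replaced by its concave-minimization half (\cite{cook-hartmann-kannan-mcdiarmid-1992}).
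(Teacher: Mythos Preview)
Your strategy---factor $\ve x^TQ\ve x=(g_++g_-)(g_+-g_-)$ with $g_\pm=\sqrt{\ve x^TQ_\pm\ve x}$, set $\sigma=g_++g_-$ and $\beta=g_+-g_-$, and feed the single-term product $f=\beta\cdot\sigma$ into Theorem~\ref{thm:sliceable-product-sum}---is exactly the paper's. But two steps do not go through as written. First, $\sigma$ is \emph{not} sliceable using the single direction $\ve c^T\ve x$ you propose. Pinning $\ve c^T\ve x$ to a geometric window controls $g_-=\sqrt\mu\,|\ve c^T\ve x|$, but $g_+$ remains unconstrained on that slice, so $\sigma(\ve x)/\sigma(\ve y)$ is unbounded and Definition~\ref{def:sliceable} fails. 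Your parenthetical that ``$f$ becomes convex on each slice'' is true but only yields an \emph{additive} error of order $\epsilon\,\mu(\ve c^T\ve x)^2$ on the slice; near $f_\opt\approx 0$ this is precisely the multiplicative blow-up you flag in your last paragraph, and it cannot be fed into Theorem~\ref{thm:sliceable-product-sum}. The paper instead slices on all $n$ linear forms $S_1\cdot\ve x,\ldots,S_n\cdot\ve x$ from the diagonalization $Q=SDS^T$: then every $|S_i\cdot\ve x|$ is pinned to a $(1+\epsilon)$-factor, hence so is $\sigma$, with $\zeta=1$.

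Second, the claim that $\min\{\beta(\ve x):\ve x\in\bar P\cap\Z^n\}$ (and its concave twin $-\beta$) is disposed of by Theorem~\ref{thm:convex-concave} is a genuine gap. On each halfspace $\{\pm\ve c^T\ve x\ge 0\}$ the function $\beta$ is convex, but it is \emph{not a polynomial} and takes irrational values, so part~(\ref{itm:quasi-convex-semi-algebraic}) does not apply as stated, and part~(\ref{itm:quasi-concave}) requires a comparison oracle you have not provided. The paper closes this by observing that the sublevel set $\{\beta(\ve x)\le\beta_0,\ S_n\cdot\ve x\ge 0\}$ is, after squaring the second-order cone inequality, a convex semi-algebraic set, so integer feasibility for rational $\beta_0$ is decidable via Theorem~\ref{thm:convex-concave}(\ref{itm:quasi-convex-semi-algebraic}); a binary search on $\beta_0$ then terminates at an \emph{exact} minimizer because distinct values of $\sqrt{-d_n}\,\beta$ on integer points are separated by an inverse polynomial in $R$ (Lemma~\ref{lem:difference}, which lower-bounds nonzero gaps of the form $(\sqrt p+q)-(\sqrt{p'}+q')$). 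The same separation bound supplies the comparison oracle for the concave minimization. Your observation that $f$ is integer-valued on $\Z^n$ is correct but does not help here: it is $\beta$, not $f$, that must be minimized on each slice, and $\beta$ is not integer-valued.
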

We present a proof for Theorem~\ref{thm:main} in  Section~\ref{sec:quadratics}.
Note that Theorem~\ref{thm:main} (\ref{itm:main_max}) is equivalent to maximizing $\ve x^T Q \ve x$ with at most one negative eigenvalue.
Together with known results on quasi-convex and quasi-concave optimization, Theorem~\ref{thm:main} implies that for $n = 3$ there is an FPTAS for general $Q$.

\begin{corollary}
\label{cor:dim3}
Let $Q \in \Z^{n \times n}$ be a symmetric matrix and $n = 3$. Then there is an FPTAS for Problem~\eqref{eq:problem-general} with $f(\ve x) = \ve x^T Q \ve x$.
\end{corollary}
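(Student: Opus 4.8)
The plan is to reduce Corollary~\ref{cor:dim3} to Theorem~\ref{thm:main} by a counting argument on the eigenvalue signs of $Q$. First I would compute the inertia $(n_+, n_-, n_0)$ of $Q$ --- the numbers of positive, negative, and zero eigenvalues --- which, as recalled in the introduction, can be done in time polynomial in the size of $Q$ (e.g.\ via \cite{bunch_partial_1974} or \cite{hartung_computational_2007}). Since $P$ has already been assumed bounded, and since the FPTAS promised by Theorem~\ref{thm:main} by definition also decides feasibility, it suffices to exhibit, on the basis of the inertia, one case of Theorem~\ref{thm:main} that applies.

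The key observation is that $n_+ + n_- = n - n_0 \le n = 3$, so we cannot simultaneously have $n_+ \ge 2$ and $n_- \ge 2$; hence at least one of $n_- \le 1$ and $n_+ \le 1$ holds. If $n_- \le 1$, then $Q$ has at most one negative eigenvalue and Theorem~\ref{thm:main}~(\ref{itm:main_min}) yields an FPTAS for minimizing $\ve x^T Q \ve x$ over $P \cap \Z^3$; if $n_+ \le 1$, then $Q$ has at most one positive eigenvalue and Theorem~\ref{thm:main}~(\ref{itm:main_max}) yields one (and when both hold, either may be invoked). As one of the two always applies, this gives an FPTAS for arbitrary symmetric $Q \in \Z^{3 \times 3}$. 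It is worth locating the degenerate configurations in this dichotomy: a positive semidefinite $Q$ has $n_- = 0$ and falls into case~(\ref{itm:main_min}), a negative semidefinite $Q$ has $n_+ = 0$ and falls into case~(\ref{itm:main_max}), and $Q = 0$ lies in both; in those (semi)definite cases the objective is convex or concave and the conclusion is in fact already available from the convex/concave --- and, more generally, quasi-convex/quasi-concave --- integer optimization results recalled in Theorem~\ref{thm:convex-concave}. The genuinely new content therefore concerns the indefinite inertias $(2,1,0)$ and $(1,2,0)$, which are exactly the situations that Theorem~\ref{thm:main} covers.

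I expect essentially no obstacle here beyond Theorem~\ref{thm:main} itself; the only point requiring care is the bookkeeping, namely that ``at most one'' in the statement of Theorem~\ref{thm:main} is meant inclusively of zero, so that the two cases of the theorem genuinely exhaust all possible inertias $(n_+, n_-, n_0)$ with $n_+ + n_- + n_0 = 3$ --- and, in particular, subsume the semidefinite and zero cases --- leaving nothing uncovered.
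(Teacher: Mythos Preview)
Your argument is correct and follows essentially the same case analysis on the inertia of $Q$ as the paper's proof. The only difference is cosmetic: where the paper dispatches the semidefinite cases ($n_- = 0$ or $n_+ = 0$) separately via Theorem~\ref{thm:convex-concave}, you absorb them into the ``at most one'' clauses of Theorem~\ref{thm:main}, which is a valid and slightly more economical reduction.
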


In order to prove this corollary, we need the following known results and notation, which are also used for our other results.
Given a convex set $C$, a function $f$ is \emph{quasi-convex} on $C$ if the set $\{ \ve x \in C \,:\, f(\ve x) \le \alpha \}$ is convex for all $\alpha \in \R$. Convex functions are quasi-convex on any convex set in their domain. We say that $f$ is \emph{quasi-concave} if $-f$ is quasi-convex.
A \emph{semi-algebraic set} in $\R^n$ is a subset of the form 
$\bigcup_{i=1}^s \bigcap_{j=1}^{r_i} \left\{ \ve x \in \R^n \,\, | \,\, f_{i,j}(\ve x)\ *_{i,j}\ 0 \right\}$
where $f_{i,j} \colon \R^n \to \R$ is a polynomial in $n$ variables and $*_{i,j}$ is either $<$ or $=$ for $i = 1, \dots, s$ and $j = 1, \dots, r_i$ (cf.\ \cite{bochnak_real_1998}). 
For a polyhedron $P$, denote by $P_I$ the \emph{integer hull} of $P$, i.e., the convex hull of $P \cap \Z^n$.

\begin{theorem}[\cite{khackiyan_integer_2000}, \cite{cook-hartmann-kannan-mcdiarmid-1992}]
\label{thm:convex-concave}
For $A \in \Z^{m \times n}$ and $\ve b \in \Z^m$, let $P = \{ \ve x \in \R^n \,:\, A \ve x \le \ve b \}$ be a bounded polytope, and let $n$ be fixed.
\begin{enumerate}[(i)]
\item If $C$ is a convex, semi-algebraic set given by polynomial inequalities of degree at most $d$ and with integral coefficients of size at most $l$, and $f$ is a polynomial which is quasi-convex on $P$, then $\min \{ f(\ve x) : \ve x \in P \cap C \cap \Z^n\}$ can be solved in polynomial time in $d$, $l$ and the size of $P$ and the size of the coefficients of all involved polynomials.%
\label{itm:quasi-convex-semi-algebraic}
\item The vertices of $P_I$ can be enumerated in polynomial time in the size of $P$.%
\label{itm:vertices-integer-hull}
\item If a comparison oracle for $f$ is available, i.e., an oracle that given $\ve x, \ve y \in \Z^n$ decides whether $f(\ve x) < f(\ve y)$ holds or not, and $f$ is quasi-concave on $P_I$, then Problem~\eqref{eq:problem-general} can be solved in polynomial time in the size of $P$.%
\label{itm:quasi-concave}
\end{enumerate}
\end{theorem}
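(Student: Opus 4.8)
\textbf{Proof proposal for Theorem~\ref{thm:convex-concave}.}

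This theorem collects three results that are essentially quoted from \cite{khackiyan_integer_2000} and \cite{cook-hartmann-kannan-mcdiarmid-1992}, so the plan is to assemble the known machinery rather than to prove everything from scratch; nevertheless, some glue is required, especially to get the semi-algebraic refinement in part~\eqref{itm:quasi-convex-semi-algebraic}. Throughout, $n$ is fixed, which is what makes all three statements possible --- each of the underlying algorithms has a running time with $n$ in the exponent.

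For part~\eqref{itm:quasi-convex-semi-algebraic}, the backbone is the algorithm of \cite{khackiyan_integer_2000} (building on Grötschel--Lovász--Schrijver and the lattice-based techniques of Lenstra) for minimizing a quasi-convex polynomial over the integer points of a polytope in fixed dimension. The first step is to observe that the feasible region $P \cap C$ is still convex, since $C$ is convex and $P$ is a polytope, and that it is described by the linear inequalities $A\ve x \le \ve b$ together with the polynomial inequalities defining $C$; its ``size'' (degrees, coefficient bitlengths, number of constraints) is polynomial in $d$, $l$, and the size of $P$. The second step is to supply a separation oracle for $P \cap C$ over $\Q^n$: given a rational point, check the linear inequalities and each polynomial inequality $g(\ve x) *_j 0$ of $C$; if some convex constraint $g_j(\ve x)\le 0$ is violated, a subgradient of $g_j$ (computed symbolically from its coefficients) yields a separating hyperplane --- here one uses that a convex semi-algebraic set can be written with each defining polynomial convex on the relevant region, or, more carefully, one invokes the version of \cite{khackiyan_integer_2000} that already allows quasi-convex polynomial constraints. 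The third step is to feed this oracle, together with the quasi-convex objective $f$, into the fixed-dimension integer minimization procedure, which branches on lattice hyperplanes via flatness and recurses in dimension $n-1$; the recursion depth is $n$ and at each level the number of subproblems and the bitlengths stay polynomial in $d$, $l$, and the size of $P$. The output is either ``infeasible'' or an optimal integer point, as claimed.

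Part~\eqref{itm:vertices-integer-hull} is the classical theorem of Cook, Hartmann, Kannan, and McDiarmid \cite{cook-hartmann-kannan-mcdiarmid-1992}: in fixed dimension the integer hull $P_I$ of a rational polytope $P$ has only polynomially many vertices, and they can be enumerated in polynomial time. I would simply cite this, noting for completeness that the bound on the number of vertices is $O(\mathrm{size}(P)^{\,n-1})$ and that the enumeration proceeds by, e.g., repeatedly optimizing linear functionals over $P\cap\Z^n$ (each such optimization being a fixed-dimension integer program solvable by Lenstra's algorithm) and using the structure of the normal fan. For part~\eqref{itm:quasi-concave}, the key structural fact is that a quasi-concave function on a polytope attains its minimum at a vertex of that polytope --- applied to $P_I$, whose vertices are integral, this means $\min\{f(\ve x):\ve x\in P\cap\Z^n\}$ is attained at some vertex of $P_I$. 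Hence the algorithm is: enumerate the (polynomially many) vertices of $P_I$ by part~\eqref{itm:vertices-integer-hull}, then use the comparison oracle for $f$ to select a vertex of minimum value by pairwise comparisons; this is polynomial in the size of $P$ and needs only the comparison oracle, never an evaluation of $f$ itself.

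The main obstacle is the semi-algebraic refinement in part~\eqref{itm:quasi-convex-semi-algebraic}: one must be careful that the defining polynomials of $C$, although their common solution set is convex, need not themselves be convex, so producing a correct separation oracle requires either the stronger version of \cite{khackiyan_integer_2000} that handles quasi-convex polynomial constraints directly, or a preprocessing argument that the convex semi-algebraic set can be represented by quasi-convex polynomial inequalities of controlled degree and coefficient size. Tracking that the degree and coefficient bounds propagate through Lenstra-type branching without blowing up --- so that the final running time is genuinely polynomial in $d$, $l$, and $\mathrm{size}(P)$ --- is the delicate bookkeeping step; everything else follows from citing \cite{khackiyan_integer_2000} and \cite{cook-hartmann-kannan-mcdiarmid-1992} and the elementary fact about quasi-concave minima at vertices.
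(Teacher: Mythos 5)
The paper gives no proof of this theorem at all---it is imported directly from \cite{khackiyan_integer_2000} and \cite{cook-hartmann-kannan-mcdiarmid-1992}---and your assembly is exactly the intended justification: Khachiyan--Porkolab for part~(i), Cook--Hartmann--Kannan--McDiarmid for part~(ii), and vertex enumeration of $P_I$ combined with the fact that a quasi-concave function attains its minimum over a polytope at a vertex (selected via the comparison oracle) for part~(iii). The ``delicate'' issue you raise in part~(i) is moot, since the cited result of \cite{khackiyan_integer_2000} is already stated for convex semialgebraic sets whose defining polynomials need not be individually convex, so no separation-oracle preprocessing is required.
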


\begin{proof}[Proof of Corollary~\ref{cor:dim3}]
The inertia of $Q$ can be computed in polynomial time (see \cite{bunch_partial_1974} or \cite{hartung_computational_2007}).
If $Q$ has no negative eigenvalues, then $f$ is convex because it is the sum of convex functions, so we can apply Theorem~\ref{thm:convex-concave} (\ref{itm:quasi-convex-semi-algebraic}) with $C = P$.
If $Q$ has no positive eigenvalues, then $f$ is concave, so we can can apply Theorem~\ref{thm:convex-concave} (\ref{itm:quasi-concave}).
If $Q$ has one negative eigenvalue, we can apply Theorem~\ref{thm:main} (\ref{itm:main_min}), whereas if there are two, we can apply Theorem~\ref{thm:main} (\ref{itm:main_max}).
\end{proof}

The remainder of this paper is outlined as follows:
In Section~\ref{sec:sliceable} we explain basic and sliceable functions and present the main tool of this paper.
In Section~\ref{sec:quadratics} we give some important properties of quadratics and prove Theorem~\ref{thm:main}.

\section{Basic and Sliceable Functions}
\label{sec:sliceable}
Let $\mathcal P$ be a set of polytopes that is closed under taking subsets, i.e., such that for all $\bar P \subseteq P$, $P \in \mathcal P$ implies $\bar P \in \mathcal P$.  
A set of \emph{basic functions} $\Omega_{\mathcal P}$ is a class of functions with a common encoding structure for all elements and the following properties:
\begin{enumerate}[(i)]
\item $\Omega_{\mathcal P}$ is  closed under addition and multiplication by $\lambda \ge 0$,
\item each $f \in \Omega_{\mathcal P}$ admits a FPTAS for Problem~\eqref{eq:problem-general} over each polytope $P \in \mathcal P$,
\item all functions in $\Omega_{\mathcal P}$ are sign compatible, i.e., $\sign(f(\ve x)) = \sign(g(\ve x))$ for all $f, g \in \Omega_{\mathcal P}$, $\ve x \in P$, $P \in \mathcal P$.
\end{enumerate}

Examples of classes of functions $\Omega_{\mathcal P}$ that satisfy the properties above are the set of sign-compatible convex function over $\mathcal P$ presented by a first order oracle (cf.~\cite{Grotschel1988, Oertel2014}), sign-compatible concave functions over $\mathcal P$ presented by an evaluation oracle (cf.~\cite{cook-hartmann-kannan-mcdiarmid-1992}), and the set of polynomials that are negative on some polytope over $\mathcal P$ (cf.~\cite{loera_integer_2006, loera_mixed_integer_2008}).

We next introduce a class of functions that can be combined with basic functions and preserve the property that they admit a FPTAS for Problem~\eqref{eq:problem-general}.   For this, we need the following notation.
 
For $\ve k \in \Z^l$, $\ve c^0 \in \Z^l$, $\ve c^1, \ldots, \ve c^l \in \Z^n$, and $\epsilon > 0$, define 
\begin{multline*}
B_{\ve k}(\ve c^0, \ve c^1, \ldots, \ve c^l, \epsilon) := \Big\{
\ve x \in \R^n
\ :\  \\
\begin{cases}
(1 + \epsilon)^{k_j-1} \le L_j(\ve x) \le (1 + \epsilon)^{k_j}
& \textrm{if } k_j \ge 1 \\
L_j(\ve x) = 0
& \textrm{if } k_j = 0 \\
-(1 + \epsilon)^{-k_j-1} \le L_j(\ve x) \le -(1 + \epsilon)^{-k_j}
& \textrm{if } k_j \le -1
\end{cases} \\
\textrm{for } j = 1, \ldots, l
\Big\}.
\end{multline*}
where $L_j(\ve x) := \ve c^j \cdot \ve x + c^0_j$.
Let $B(\ve c^0, \ve c^1, \ldots, \ve c^l, \epsilon) := \bigcup_{\ve k \in \Z^l} B_{\ve k}(\ve c^0, \ve c^1, \ldots, \ve c^l, \epsilon)$.
Note that for all $\ve c^0 \in \Z^l$, $\ve c^1, \ldots, \ve c^l \in \Z^n$, and $\epsilon > 0$ we have that $\Z^n \subseteq B(\ve c^0, \ve c^1, \ldots, \ve c^l, \epsilon)$.
However, $\R^n \not \subseteq B(\ve c^0, \ve c^1, \ldots, \ve c^l, \epsilon)$, because $B_{\ve k}(\ve c^0, \ve c^1, \ldots, \ve c^l, \epsilon)$ is not full-dimensional if any entry of $\ve k$ is equal to zero.

\begin{Definition}
\label{def:sliceable}
Let $C \subseteq \R^n$.
A function $s\colon C \to \R_{\ge 0}$ is called \emph{sliceable} over $C$ if there exist $\ve c^0 \in \Z^l$, $\ve c^1, \ldots, \ve c^l \in \Z^n$ and $\zeta \in \Z_{\ge 1}$ such that for all $\epsilon > 0$ and for all $\ve k \in \Z^l$, we have that 
\begin{equation}
\label{eq:sliceable}
s(\ve x) \le (1 + \epsilon) s(\ve y)
\end{equation}
for all $\ve x, \ve y \in C \cap B_{\ve k}(\ve c^0, \ve c^1, \ldots, \ve c^l, \epsilon / \zeta) \cap \Z^n$.

We define the size of a sliceable function $s$ as the sum of the sizes of $\ve c^0, \ve c^1, \ldots, \ve c^l$ and the unary encoding size of $\zeta$.
\end{Definition}

It follows from Definition~\ref{def:sliceable} that if a function $s$ is sliceable over $C$ with parameters $\ve c^0, \ve c^1, \ldots, \ve c^l, \zeta$, and $s(\bar{\ve x}) = 0$ for some $\bar {\ve x} \in C \cap B_{\ve k}(\ve c^0, \ve c^1, \ldots, \ve c^l, \epsilon / \zeta) \cap \Z^n$, then $s(\ve x) = 0$ for all $\ve x \in C \cap B_{\ve k}(\ve c^0, \ve c^1, \ldots, \ve c^l, \epsilon / \zeta) \cap \Z^n$.
Regardless of this propagation property, there exist large classes of sliceable functions.
We present several examples of non-trivial sliceable functions below. To avoid the propagation of the zeros, we give choices of the parameters $\ve c^0, \ve c^1, \ldots, \ve c^l, \zeta$ such that the zeros of the function lay on intersections of hyperplanes defined by $L_j(\ve x) = 0$ for $j \in J$ for some $J \subseteq 1, \ldots, l$.

The set of sliceable functions is not closed under additive inverses, but is in fact a proper semifield as the following lemma shows.
\begin{lemma} 
\label{lem:sliceable-properties}
Let $s, r \colon C \to \R_{\ge 0}$ be sliceable functions and let $\lambda \in \R_{\ge 0}$.
Then $\lambda s$, $s + r$ and $s \, r$ are sliceable with size polynomial in the size of $s$ and $r$. 
Moreover, if $s > 0$, then $\frac{1}{s}$ is sliceable with size polynomial in the size of $s$.
\end{lemma}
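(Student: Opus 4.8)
The plan is to verify the semifield closure properties one at a time, in increasing order of difficulty, reusing the defining data of $s$ and $r$ wherever possible and only enlarging the parameter $\zeta$ when forced to. Throughout I would normalize once, at the start of the section, that it suffices to guarantee the sliceability inequality~\eqref{eq:sliceable} for $\epsilon\le 1$: larger accuracies are easier and, for the FPTAS, obtainable by capping $\epsilon$ at $1$.

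Scalar multiples and reciprocals are essentially free. For $\lambda s$ with $\lambda>0$, keep the forms $\ve c^0,\ve c^1,\ldots,\ve c^l$ and the constant $\zeta$ of $s$ and multiply~\eqref{eq:sliceable} by $\lambda$; the case $\lambda=0$ is trivial (any data works). For $\frac1s$ with $s>0$, again keep the forms and $\zeta$ of $s$; the point is that membership of $\ve x$ and $\ve y$ in a common slice $B_{\ve k}(\ve c^0,\ldots,\ve c^l,\epsilon/\zeta)$ is a symmetric relation, so~\eqref{eq:sliceable} applies with $\ve x$ and $\ve y$ interchanged to give $s(\ve y)\le(1+\epsilon)s(\ve x)$, and dividing by $s(\ve x)s(\ve y)>0$ yields $\frac1{s(\ve x)}\le(1+\epsilon)\frac1{s(\ve y)}$. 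In both cases the size is unchanged.

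For $s+r$ and $s\,r$ I would take as the new list of linear forms the concatenation of the forms $L_1,\ldots,L_l$ of $s$ with the forms $M_1,\ldots,M_m$ of $r$; then for $\ve k=(\ve k',\ve k'')\in\Z^{l+m}$ every slice $B_{\ve k}$ of the combined system is the intersection of a slice $B_{\ve k'}$ of $s$'s system with a slice $B_{\ve k''}$ of $r$'s system, at the same parameter. Given a target accuracy $\epsilon$ and a parameter $\zeta$ still to be chosen, set $\epsilon_s:=\epsilon\,\zeta_s/\zeta$ and $\epsilon_r:=\epsilon\,\zeta_r/\zeta$, so that $\epsilon_s/\zeta_s=\epsilon_r/\zeta_r=\epsilon/\zeta$; for $\ve x,\ve y$ in a common slice of the combined system at parameter $\epsilon/\zeta$, applying the sliceability of $s$ and of $r$ at accuracies $\epsilon_s$ and $\epsilon_r$ gives $s(\ve x)\le(1+\epsilon_s)s(\ve y)$ and $r(\ve x)\le(1+\epsilon_r)r(\ve y)$. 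For the sum, choosing $\zeta:=\max(\zeta_s,\zeta_r)$ makes $\epsilon_s,\epsilon_r\le\epsilon$, whence $(s+r)(\ve x)=s(\ve x)+r(\ve x)\le(1+\epsilon_s)s(\ve y)+(1+\epsilon_r)r(\ve y)\le(1+\epsilon)(s+r)(\ve y)$. For the product, one gets $(s\,r)(\ve x)\le(1+\epsilon_s)(1+\epsilon_r)(s\,r)(\ve y)$, and it remains to ensure $(1+\epsilon_s)(1+\epsilon_r)\le1+\epsilon$; with $\zeta:=2(\zeta_s+\zeta_r)$ this follows from $\epsilon_s+\epsilon_r+\epsilon_s\epsilon_r\le 2(\epsilon_s+\epsilon_r)=\epsilon$ once $\max(\epsilon_s,\epsilon_r)\le1$, which holds whenever $\epsilon\le2$. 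In every case the new list of forms has size at most the sum of the sizes of $s$ and $r$, and $\zeta\le2(\zeta_s+\zeta_r)$, whose unary size is linear in the sizes of $s$ and $r$, so the resulting sliceable functions have size polynomial in the sizes of $s$ and $r$ (and, for $\frac1s$, in the size of $s$).

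The one genuinely delicate step is the product: the inequality $(1+\epsilon_s)(1+\epsilon_r)\le1+\epsilon$ cannot hold for a fixed $\zeta$ and arbitrarily large $\epsilon$, since the cross term $\epsilon_s\epsilon_r$ is quadratic in $\epsilon$ while the right-hand slack $\epsilon$ is only linear. This is exactly what the normalization $\epsilon\le1$ is there to absorb; with that in hand the estimate above goes through, and everything else reduces to bookkeeping about concatenating the defining data and tracking sizes.
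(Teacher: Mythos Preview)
Your proof is correct and follows essentially the same route as the paper: the same concatenation of the linear forms for $s+r$ and $s\,r$, the same $\zeta=\max(\zeta_s,\zeta_r)$ for the sum, the same symmetry argument for $1/s$, and the same normalization to $0<\epsilon\le 1$ to absorb the cross term in the product. The only cosmetic difference is your choice $\zeta=2(\zeta_s+\zeta_r)$ for the product versus the paper's $\zeta=4\max(\zeta_s,\zeta_r)$, which are within a constant factor of each other and lead to the same estimate $(1+\epsilon_s)(1+\epsilon_r)\le 1+\epsilon$.
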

\begin{proof}
Let $s$ be sliceable with parameters $\ve c^0$, $\ve c^1$, \ldots, $\ve c^{l_s}$, $\zeta_s$ and $r$ with parameters $\ve d^0$, $\ve d^1$, \ldots, $\ve d^{l_r}$, $\zeta_r$.  Set $\zeta_{s + r} = \max\{ \zeta_s, \zeta_r\}$ and $\zeta_{sr} = 4 \max\{\zeta_s, \zeta_r\}$.  We show that $s+r$ and $sr$ are sliceable with the same parameters $[\ve c^0; \ve d^0], \ve c^1, \ldots, \ve c^{l_s}, \ve d^1, \ldots, \ve d^{l_r}$ and  with $\zeta_{s+r}, \zeta_{sr}$, respectively.
Then for $\zeta = \zeta_{s + r}, \zeta_{s r}$ and $0 < \epsilon < 1$,
\begin{multline*}
B_{[\ve k^1; \ve k^2]}([\ve c^0; \ve d^0], \ve c^1, \ldots, \ve c^{l_s}, \ve d^1, \ldots, \ve d^{l_r}, \epsilon/ \zeta)
\\
\subseteq B_{\ve k^1}(\ve c^0, \ve c^1, \ldots, \ve c^{l_s},  \epsilon/\zeta ) \cap  B_{\ve k^2}( \ve d^0, \ve d^1, \ldots, \ve d^{l_r}, \epsilon/\zeta).
\end{multline*}

Let
$
\ve x, \ve y \in B_{[\ve k^1; \ve k^2]}\big([\ve c^0; \ve d^0], \ve c^1, \ldots, \ve c^{l_s}, \ve d^1, \ldots, \ve d^{l_r},
\allowbreak
\epsilon/ \zeta\big) \cap \Z^n.
$
By the containment, it follows that 
\begin{equation*}
s(\ve x) \leq (1 + \epsilon \tfrac{\zeta_s}{ \zeta}) s(\ve y) \leq (1 + \epsilon) s(\ve y)
\end{equation*}
and 
\begin{equation*}
r(\ve x)\leq (1 + \epsilon \tfrac{\zeta_r}{ \zeta }) r(\ve y) \leq (1 + \epsilon) r(\ve y) .
\end{equation*}

For $\zeta = \zeta_{s + r}$, by combining these, we see that $s(\ve x) + r(\ve x) \leq (1+\epsilon) (s(\ve y) + r(\ve y))$.  Hence, $s+r$ is sliceable.  
Instead, for $\zeta = \zeta_{s r}$, by combining these, we see that $sr$ is also sliceable since
\begin{align*}
s(\ve x) r(\ve x)
&\leq (1 + \epsilon \tfrac{\zeta_s}{ \zeta_{sr}}) s(\ve y) (1 + \epsilon \tfrac{\zeta_r}{ \zeta_{sr} }) r(\ve y)
\\
&\leq (1 + \tfrac{\epsilon}{4})^2 s(\ve y) r (\ve y)
\\
&\leq (1+ \epsilon) s(\ve y) r(\ve y).
\end{align*}

The function $\lambda s$ is trivially sliceable the same parameters as $s$.  Also, if $s > 0$, then it follows $\tfrac{1}{s}$ is sliceable with the same parameters as $s$.  
Indeed, for $\ve x, \ve y \in B_{\ve k}(\ve c^0, \ve c^1, \ldots, \ve c^{l_s},  \epsilon/\zeta_s ) \cap \Z^n$
it holds that  $s(\ve y) \leq (1+\epsilon) s(\ve x)$.  By dividing, we have $\tfrac{1}{s(\ve x)} \leq (1+ \epsilon) \tfrac{1}{s(\ve y)}$.
\end{proof}

Any function that is the sum of even powers of linear forms is sliceable over $\R^n$.  Rational powers of sliceable functions are also sliceable.  For instance,  $\sqrt{ s(\ve x)}$ is sliceable for any sliceable function $s$.
Moreover, any polynomial that is positive on a polyhedron $P$ is sliceable on $P$.  Handelman~\cite{handelman1988} showed that any polynomial $f$ that is positive over a polyhedron 
 has a representation as $f(\ve x) = \sum_{|\alpha| \leq L, \alpha \in \Z_+^n} \beta_\alpha \prod_{i=1}^n (\ve a^i \cdot \ve x - b_i)^{\alpha_i}$ for some $L \in \Z_+$ and $\beta_\alpha \geq 0$.  
It follows that $f$ is sliceable over $P$.  Unfortunately, the parameter $\zeta$ grows exponentially with $L$.  The best bounds on $L$ are given by Powers and Reznick~\cite{Powers2001}, which are in terms of the coefficients of $f$ and the smallest value that $f$ attains on $P$.  The smallest value is bounded by a polynomial in the input provided the degree and the dimension are fixed~\cite{Jeronimo2010}, but this still implies that the best known bound for $L$ is not of polynomial size in the input.  Therefore there is no known bound on the size of $\zeta$ based on the input of the polynomial $f$ that would admit an FPTAS for Problem~\eqref{eq:problem-general} over $P$ with this decomposition.

\begin{Example}{}
The polynomial $M(x, y) = x^4 y^2 + x^2 y^4 - 3 x^2 y^2 + 1$ is known as the Motzkin polynomial. While it is not SOS, it is known to be positive because $(x^2 + y^2) \cdot M(x, y)$ has an explicit SOS decomposition (see \cite[p.~84]{parrilo_polynomial_2012}).  In particular, for $x,y \neq 0$, we have  
\begin{align*}
M(x,y) 
&= \frac{1}{(x^2 + y^2)} \Big( y^2(1-x)^2(1+x)^2 
\\
&+ x^2(1-y)^2(1+y)^2 + x^2 y^2 (x^2 + y^2 - 2)^2 \Big).
\end{align*}
Thus, for $x,y \neq 0$, $M(x,y)$ is a composition of sliceable functions and a convex function.  That is, the parameters $\ve c^0, \ve c^1, \ldots, \ve c^l$ are defined by $c^j_1 x + c^j_2 y + c^0_j = F_j(x, y)$ for $F_1(x, y), \ldots, F_6(x, y)$ equal to $x,y, 1-x, 1+x, 1-y, 1+y$ respectively, and the convex function is $(x^2 + y^2 - 2)^2$.  Sign-compatibility is also given because all involved functions are non-negative.  As we will see in Theorem~\ref{thm:sliceable-product-sum}, this implies that Problem~\eqref{eq:problem-general} with $f(x, y) = M(x,y)$ admits an FPTAS over polytopes.
\end{Example}

The following lemma allows us to find approximate solutions by taking the best approximate solution over a decomposition of the feasible region.

\begin{lemma}
\label{lem:main-tool}

Let $C \subseteq \R^n$ such that $C \cap \Z^n \ne \emptyset$, and for $j=1, \dots, m$ let $s_j \colon C \to \R_{\geq 0}$ for $j=1, \dots, m$ 
and let $g_j \colon C \to \R$.
Let $0 < \epsilon < 1$ and $\epsilon' = \epsilon / 4$. Let $K \subseteq \Z^n$ such that $C \cap \Z^n \subseteq \bigcup_{\ve k \in K} B_{\ve k}$, where $B_{\ve k} \subseteq \R^n$  are polytopes.
Also, let $L_j^{\ve k} \geq 0$ such that $L_j^{\ve k} \leq s_j(\ve x) \leq (1+\epsilon') L_j^{\ve k}$ for all $\ve x \in B_{\ve k} \cap \Z^n$ and $j=1, \dots, m$.

Then, for
\begin{equation*}
\ve x_\epsilon := \arg \min \left\{ \sum_{j=1}^m g_j(\ve{ x}^{\ve k}_{\epsilon'}) s_j(\ve{ x}^{\ve k}_{\epsilon'}) : \ve k \in K \right\}
\end{equation*}
where  $\ve{ x}^{\ve k}_{\epsilon'}$ is an $\epsilon'$-approximate solution to 
\begin{equation*}
\min \left\{ \sum_{j=1}^m g_j(\ve x) L_j^{\ve k} : \ve x \in C \cap B_{\ve k} \cap \Z^n \right\},
\end{equation*}
we have that $\ve x_\epsilon$ is an $\epsilon$-approximate solution to 
\begin{equation*}
\min \left\{ \sum_{j=1}^m g_j(\ve x) s_j(\ve x) : \ve x \in C \cap \Z^n \right\} .
\end{equation*}
\end{lemma}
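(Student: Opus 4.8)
The plan is to compare, on a single well-chosen piece $B_{\ve k^*}$ of the subdivision, the true objective $f(\ve x):=\sum_{j=1}^m g_j(\ve x)s_j(\ve x)$ with the ``frozen'' objective $\phi(\ve x):=\sum_{j=1}^m g_j(\ve x)L_j^{\ve k^*}$ obtained by replacing each sliceable factor $s_j$ by its constant bound $L_j^{\ve k^*}$ on that piece. Concretely, I would fix an optimal solution $\ve x_\opt$ of $\min\{f(\ve x):\ve x\in C\cap\Z^n\}$; since $\ve x_\opt\in C\cap\Z^n\subseteq\bigcup_{\ve k\in K}B_{\ve k}$, pick $\ve k^*\in K$ with $\ve x_\opt\in B_{\ve k^*}$ and set $\mathrm{OPT}:=\min\{\phi(\ve x):\ve x\in C\cap B_{\ve k^*}\cap\Z^n\}$. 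The conclusion is then read off from a chain of comparisons on $B_{\ve k^*}$: (a) $f(\ve x_\epsilon)\le f(\ve x^{\ve k^*}_{\epsilon'})$, because $\ve x_\epsilon$ is by construction the $\ve x^{\ve k}_{\epsilon'}$ minimizing exactly $\sum_j g_j(\ve x^{\ve k}_{\epsilon'})s_j(\ve x^{\ve k}_{\epsilon'})=f(\ve x^{\ve k}_{\epsilon'})$; (b) $f(\ve x^{\ve k^*}_{\epsilon'})$ versus $\phi(\ve x^{\ve k^*}_{\epsilon'})$; (c) $\phi(\ve x^{\ve k^*}_{\epsilon'})$ versus $\mathrm{OPT}$, using that $\ve x^{\ve k^*}_{\epsilon'}$ is an $\epsilon'$-approximate solution of the frozen subproblem over $B_{\ve k^*}$; (d) $\mathrm{OPT}\le\phi(\ve x_\opt)$, by feasibility of $\ve x_\opt$ for that subproblem; and (e) $\phi(\ve x_\opt)$ versus $f(\ve x_\opt)$.

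The single ingredient behind (b) and (e) is an elementary pointwise estimate on $B_{\ve k}\cap\Z^n$: since $L_j^{\ve k}\le s_j(\ve x)\le(1+\epsilon')L_j^{\ve k}$ with $L_j^{\ve k}\ge0$, and since the $g_j$ are sign-compatible (so at $\ve x$ all the $g_j(\ve x)$, hence all products $g_j(\ve x)s_j(\ve x)$ and all $g_j(\ve x)L_j^{\ve k}$, share one sign), the values $\phi(\ve x)$ and $f(\ve x)$ have the same sign and satisfy $\phi(\ve x)\le f(\ve x)\le(1+\epsilon')\phi(\ve x)$ when that sign is nonnegative, and $(1+\epsilon')\phi(\ve x)\le f(\ve x)\le\phi(\ve x)$ when it is nonpositive. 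Thus $f$ and $\phi$ are everywhere within a factor $(1+\epsilon')$ of each other on $B_{\ve k}\cap\Z^n$; moreover $f(\ve x)=0$ forces each $g_j(\ve x)s_j(\ve x)=0$, and then $\phi(\ve x)=0$ as well, because a nonzero $g_j(\ve x)$ forces $s_j(\ve x)=0$ and hence $L_j^{\ve k}=0$.

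It then remains to run the chain through the three cases of the definition of $\epsilon$-approximate solution, according to $\sign(f(\ve x_\opt))$. If $f(\ve x_\opt)>0$, then $f>0$ at every feasible point, so $\phi>0$ on $B_{\ve k^*}\cap\Z^n$ and $\mathrm{OPT}>0$; the $\epsilon'$-guarantee gives $\phi(\ve x^{\ve k^*}_{\epsilon'})\le(1+\epsilon')\mathrm{OPT}$, and combined with $f(\ve x^{\ve k^*}_{\epsilon'})\le(1+\epsilon')\phi(\ve x^{\ve k^*}_{\epsilon'})$, with $\mathrm{OPT}\le\phi(\ve x_\opt)\le f(\ve x_\opt)$, and with (a), this gives $f(\ve x_\epsilon)\le(1+\epsilon')^2 f(\ve x_\opt)\le(1+\epsilon)f(\ve x_\opt)$, using $(1+\epsilon/4)^2\le1+\epsilon$ for $0<\epsilon<1$. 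If $f(\ve x_\opt)<0$, the estimate gives $\phi(\ve x_\opt)\le\tfrac{1}{1+\epsilon'}f(\ve x_\opt)<0$, so $\mathrm{OPT}<0$, the $\epsilon'$-guarantee gives $\phi(\ve x^{\ve k^*}_{\epsilon'})\le\tfrac{1}{1+\epsilon'}\mathrm{OPT}$, and $f(\ve x^{\ve k^*}_{\epsilon'})\le\phi(\ve x^{\ve k^*}_{\epsilon'})$ (the latter being negative, so its $g_j$'s are nonpositive), yielding $f(\ve x_\epsilon)\le\tfrac{1}{(1+\epsilon')^2}f(\ve x_\opt)\le\tfrac{1}{1+\epsilon}f(\ve x_\opt)$. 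If $f(\ve x_\opt)=0$, then $\phi(\ve x_\opt)=0$ by the estimate, no feasible point of the subproblem makes $\phi$ negative (else $f$ would be negative there too), so $\mathrm{OPT}=0$, hence $\phi(\ve x^{\ve k^*}_{\epsilon'})=0$, hence $f(\ve x^{\ve k^*}_{\epsilon'})=0$, and $0=f(\ve x_\opt)\le f(\ve x_\epsilon)\le f(\ve x^{\ve k^*}_{\epsilon'})=0$. In all cases $\ve x_\epsilon$ is feasible, being some $\ve x^{\ve k}_{\epsilon'}\in C\cap B_{\ve k}\cap\Z^n\subseteq C\cap\Z^n$.

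I expect the main obstacle to be the sign bookkeeping rather than any isolated difficulty: at each of the links (b)--(e) a factor $(1+\epsilon')$ is either incurred or comes for free, and for each sign of $f(\ve x_\opt)$ one must verify both that $\mathrm{OPT}$ has exactly the sign needed for the $\epsilon'$-approximation guarantee to take the appropriate form and that the accumulated factors land on the correct side of the target inequality; the case $f(\ve x_\opt)=0$ additionally relies on the zero-propagation noted above. The margin $\epsilon'=\epsilon/4$ is comfortably enough, since at most two factors $(1+\epsilon')$ ever accumulate and $(1+\epsilon/4)^2\le1+\epsilon$, $\tfrac{1}{(1+\epsilon/4)^2}\ge\tfrac{1}{1+\epsilon}$ for $0<\epsilon<1$.
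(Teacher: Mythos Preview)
Your proof is correct and follows essentially the same approach as the paper's: pick the piece $B_{\ve k^*}$ containing $\ve x_\opt$, compare the true objective $f$ with the frozen objective $\phi$ termwise using $L_j^{\ve k^*}\le s_j\le(1+\epsilon')L_j^{\ve k^*}$ and sign-compatibility of the $g_j$, invoke the $\epsilon'$-approximation guarantee on the frozen subproblem, and collect the at most two factors of $(1+\epsilon')$. The only organizational difference is that the paper cases on the sign of $f(\ve x^{\ve{\bar k}}_{\epsilon'})$ rather than on the sign of $f(\ve x_\opt)$, and does not isolate the zero case; your more explicit treatment of the pointwise estimate and of the case $f(\ve x_\opt)=0$ (via zero-propagation) is a cleaner presentation of the same argument.
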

\begin{proof}
Let
\begin{equation*}
\ve x_\opt := \arg \min \left\{ \sum_{j=1}^m g_j(\ve x) s_j(\ve x) : \ve x \in C \cap \Z^n \right\} ,
\end{equation*}
let $\ve{\bar k} \in K$ such that $\ve x_\opt \in B_\ve{ \bar k}$, and define $f(\ve x) := \sum_{j=1}^m g_j(\ve x) s_j(\ve x)$.
Suppose first that $f(\ve{ x}^{\ve{\bar k}}_{\epsilon'}) \ge 0$, which implies that $g_j(\ve{ x}^{\ve{\bar k}}_{\epsilon'}) \ge 0$ for all $j = 1, \ldots, m$, because all functions in $\Omega_{\mathcal P}$ are sign compatible and $s_j \ge 0$. Then 
\begin{align*}
f(\ve{ x}^{\ve{\bar k}}_{\epsilon'})
&= \sum_{j=1}^m g_j(\ve{ x}^{\ve{\bar k}}_{\epsilon'}) s_j(\ve{ x}^{\ve{\bar k}}_{\epsilon'})
\\
&\le (1 + \epsilon') \sum_{j=1}^m g_j(\ve{ x}^{\ve{\bar k}}_{\epsilon'}) L_j^{\ve k}
\\
&\le (1 + \epsilon')^2 \sum_{j=1}^m g_j(\ve x_\opt) L_j^{\ve k}
\\
&\le (1 + \epsilon')^2 \sum_{j=1}^m g_j(\ve x_\opt) s_j(\ve x_\opt)
\\
&= (1 + \epsilon')^2 f(\ve x_\opt)
\le (1 + \epsilon) f(\ve x_\opt).
\end{align*}
The second inequality follows from approximate minimality of $\ve{x}^{\ve{\bar k}}_{\epsilon'}$, which at the same time implies that $g_j(\ve x_\opt) \ge 0$ for all $j = 1, \ldots, m$, as otherwise $f(\ve{ x}^{\ve{\bar k}}_{\epsilon'}) < 0$.
In particular, this implies that $f(\ve x_\opt) \ge 0$.

Suppose now that $f(\ve{ x}^{\ve{\bar k}}_{\epsilon'}) < 0$, which implies that $g_j(\ve{ x}^{\ve{\bar k}}_{\epsilon'}) \le 0$ for all $j = 1, \ldots, m$ because all functions in $\Omega_{\mathcal P}$ are sign compatible and $s_j \ge 0$. Then 
\begin{align*}
f(\ve{ x}^{\ve{\bar k}}_{\epsilon'})
&= \sum_{j=1}^m g_j(\ve{ x}^{\ve{\bar k}}_{\epsilon'}) s_j(\ve{ x}^{\ve{\bar k}}_{\epsilon'})
\le \sum_{j=1}^m g_j(\ve{ x}^{\ve{\bar k}}_{\epsilon'}) L_j^{\ve k}
\\
&\le \frac{1}{1 + \epsilon'} \sum_{j=1}^m g_j(\ve x_\opt) L_j^{\ve k}
\\
&\le \frac{1}{(1 + \epsilon')^2} \sum_{j=1}^m g_j(\ve x_\opt) s_j(\ve x_\opt)
\\
&= \frac{1}{(1 + \epsilon')^2} f(\ve x_\opt)
\le \frac{1}{1 + \epsilon} f(\ve x_\opt).
\end{align*}
To derive those inequalities, we have again used the assumptions on $L^{\ve k}_j$ and approximate minimality of $\ve{ x}^{\ve{\bar k}}_{\epsilon'}$.
In particular, the last inequality holds because $f(\ve x_\opt) \le 0$.
The result follows from $f(\ve x_\epsilon) \le f(\ve{ x}^{\ve{\bar k}}_{\epsilon'})$.
\end{proof}

Let $\mathcal S(\mathcal P)$ be the set of sliceable functions over all $P \in \mathcal P$ with an evaluation oracle.
We define 
\begin{equation*}
\mathcal C(\Omega_{\mathcal P}) := \left\{ \sum_{j=1}^m g_j s_j \,|\, g_j \in \Omega_{\mathcal P}, s_j \in \mathcal S(\mathcal P), m \textrm{ finite} \right\} .
\end{equation*}

We now show that there is an FPTAS for all functions in the class $\mathcal C (\Omega_{\mathcal P})$. 
\begin{theorem}
\label{thm:sliceable-product-sum}
Let $\mathcal P$, $\Omega_{\mathcal P}$ and $\mathcal C(\Omega_{\mathcal P})$ be as above.
Suppose that there is an evaluation oracle for $f = \sum_{j=1}^m g_j s_j \in \mathcal C(\Omega_{\mathcal P})$ where $g_j \in \Omega_{\mathcal P}$, $s_j \in \mathcal S(\mathcal P)$, $j=1, \dots, m$.  Then for every $P \in \mathcal P$, 
 there exists an FPTAS for Problem~\eqref{eq:problem-general}, where there size of the input is the sum of the sizes of $g_j$, $s_j$ for $j = 1, \ldots, m$, and $P$.
\end{theorem}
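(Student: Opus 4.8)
The plan is to derive the theorem from Lemma~\ref{lem:main-tool}. Fix $\epsilon\in(0,1)$ and set $\epsilon'=\epsilon/4$. First, decide whether $P\cap\Z^n=\emptyset$ by integer programming in fixed dimension; if so, report infeasibility. Otherwise I would construct a subdivision of $P$ into polynomially many polytopes $B_{\ve k}$ on each of which every sliceable factor $s_j$ is constant up to a multiplicative factor $1+\epsilon'$; replace each $s_j$ on $B_{\ve k}$ by a constant $L^{\ve k}_j\ge 0$; compute an $\epsilon'$-approximate solution $\ve x^{\ve k}_{\epsilon'}$ of the ``frozen'' problem $\min\{\sum_{j=1}^m L^{\ve k}_j g_j(\ve x):\ve x\in B_{\ve k}\cap\Z^n\}$; and return the $\ve x^{\ve k}_{\epsilon'}$ with the smallest value of $f$, using the evaluation oracle for $f$. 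By Lemma~\ref{lem:main-tool} (with $C=P$ and $K$ the index set of the subdivision, which is non-empty since $P\cap\Z^n\ne\emptyset$) this is an $\epsilon$-approximate solution.

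\textbf{The subdivision.} Let $s_j$ be sliceable over $P$ with parameters $\ve c^{0,j},\ve c^{1,j},\ldots,\ve c^{l_j,j},\zeta_j$, put $\zeta:=\max_j\zeta_j$, and let $\ve c^0,\ve c^1,\ldots,\ve c^l$ ($l=\sum_j l_j$) be the concatenation of these vectors, exactly as in the proof of Lemma~\ref{lem:sliceable-properties}. Fix $\delta:=\epsilon/12$ and put $B_{\ve k}:=B_{\ve k}(\ve c^0,\ve c^1,\ldots,\ve c^l,\delta/\zeta)\cap P$ for $\ve k\in\Z^l$; these are polytopes, hence (since $\mathcal P$ is closed under taking subsets) members of $\mathcal P$, and they cover $P\cap\Z^n$ because $\Z^n\subseteq B(\ve c^0,\ldots,\ve c^l,\delta/\zeta)$. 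As in Lemma~\ref{lem:sliceable-properties}, $B_{\ve k}$ is contained in $B_{\ve k^j}(\ve c^{0,j},\ldots,\ve c^{l_j,j},\delta/\zeta)$ for the induced index $\ve k^j$, and since $\delta/\zeta=(\delta\zeta_j/\zeta)/\zeta_j$ with $\delta\zeta_j/\zeta\le\delta$, sliceability of $s_j$ (applied with the parameter $\delta\zeta_j/\zeta$) gives $s_j(\ve x)\le(1+\delta)s_j(\ve y)$ for all $\ve x,\ve y\in B_{\ve k}\cap\Z^n$ and all $j$. For each $\ve k$ with $B_{\ve k}\cap\Z^n\ne\emptyset$ pick an integer point $\ve x^{\ve k}_0$ in it (again integer programming in fixed dimension) and set $L^{\ve k}_j:=s_j(\ve x^{\ve k}_0)/(1+\delta)$; then $L^{\ve k}_j\le s_j(\ve x)\le(1+\delta)^2 L^{\ve k}_j\le(1+\epsilon')L^{\ve k}_j$ on $B_{\ve k}\cap\Z^n$, which is exactly the hypothesis needed in Lemma~\ref{lem:main-tool}.

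\textbf{Polynomiality.} On the polytope $P$ each form $L_j$ takes values of absolute value at most $2^{\mathrm{poly}}$, and $L_j(\ve x)\in\Z$ for $\ve x\in\Z^n$, so only $O\!\big(\tfrac{\zeta}{\epsilon}\,\mathrm{poly}\big)$ of the slab boundaries $\{L_j=\pm(1+\delta/\zeta)^t\}$ together with $\{L_j=0\}$ can separate integer points of $P$. Adding the facet hyperplanes of $P$, we obtain an arrangement of polynomially many hyperplanes in $\R^n$, so there are only polynomially many non-empty $B_{\ve k}$ (this is where the hypothesis that $n$ is fixed enters), and they, with their polynomial-size inequality descriptions (the bounds being rationals $(1+\delta/\zeta)^t$ with $t$ polynomially bounded), can be enumerated in polynomial time. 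For each such $\ve k$, $\sum_{j}L^{\ve k}_j g_j\in\Omega_{\mathcal P}$ because the $L^{\ve k}_j$ are nonnegative and $\Omega_{\mathcal P}$ is closed under addition and nonnegative scaling, and it admits an FPTAS over $B_{\ve k}\in\mathcal P$; hence $\ve x^{\ve k}_{\epsilon'}$ is computable in time polynomial in $1/\epsilon$ and the input size. Evaluating $f$ on the candidates via its oracle and selecting the best is polynomial as well, and Lemma~\ref{lem:main-tool} yields correctness; overall the running time is polynomial in $1/\epsilon$ and the input size.

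\textbf{Main obstacle.} The delicate point is producing a \emph{single} subdivision valid for all factors $s_j$ at once: it must be fine enough that every $s_j$ varies by at most $1+\epsilon'$ on each piece --- arranged by concatenating all parameter vectors, taking $\zeta=\max_j\zeta_j$, and using the scaling identity $\delta/\zeta=(\delta\zeta_j/\zeta)/\zeta_j$ from Lemma~\ref{lem:sliceable-properties} --- while simultaneously being coarse and structured enough to have only polynomially many pieces of polynomial encoding size, for which the hyperplane-arrangement count and fixed $n$ are what make it go through. The rest is routine bookkeeping: verifying $B_{\ve k}\in\mathcal P$ and $\sum_j L^{\ve k}_j g_j\in\Omega_{\mathcal P}$ so that the FPTAS for basic functions applies, and extracting a point $\ve x^{\ve k}_0$ in each piece to pin down the constants $L^{\ve k}_j$.
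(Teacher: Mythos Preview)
Your proposal is correct and follows essentially the same approach as the paper: concatenate the sliceable parameters of all $s_j$ with $\zeta=\max_j\zeta_j$, bound the number of nonempty cells via a hyperplane-arrangement count in fixed dimension, pick an integer point in each cell to set the constants $L_j^{\ve k}$, and invoke Lemma~\ref{lem:main-tool} together with closure of $\Omega_{\mathcal P}$ under nonnegative combinations. Your choice $\delta=\epsilon/12$ is in fact slightly more careful than the paper's, which uses $\epsilon'/\zeta_{\max}$ directly and sets $L_j^{\ve k}=s_j(\ve x^{\ve k})/(1+\epsilon')$ (yielding $s_j\le(1+\epsilon')^2 L_j^{\ve k}$ rather than the $(1+\epsilon')L_j^{\ve k}$ that Lemma~\ref{lem:main-tool} literally asks for); your extra factor absorbs this cleanly.
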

\begin{proof}
Let $f = \sum_{j=1}^m g_j s_j \in \mathcal C(\Omega_{\mathcal P})$, $P \in \mathcal P$, and let $\epsilon > 0$ . 
Feasibility of $P \cap \Z^n$ can be determined in polynomial time by \cite{lenstra_integer_1983}.
Thus we assume henceforth that $P \cap \Z^n \ne \emptyset$.
 
Let the slicing parameters of $s_j$ be $\ve c^0_j, \ve c^1_j, \ldots, \ve c^{l_j}_j, \zeta_j$ for $j=1, \dots,m$ and let $\zeta_{\max} = \max_j \{ \zeta_j\}$.
Since $P$ is bounded, there exists $R \in \Z_{\ge 1}$ of polynomial size in the size of $P$ and the sizes of $s_j$, $j=1, \dots, m$, such that $| \ve c_j^i \cdot \ve x + c^0_j | \leq R$ for all $\ve x \in P$, $i=1, \ldots, l_j$, and $j=1, \ldots, m$ (see, for example, \cite{schrijver_theory_1986}).   
Set $\epsilon' = \tfrac{\epsilon}{4}$ and $N := \left\lceil \frac{\log_2(R)}{\log_2 (1+\epsilon'/ \zeta_{\max})} \right\rceil \leq  \left\lceil \log_2(R) ( \tfrac{\zeta_{\max}}{\epsilon'} + 1) \right\rceil   $.   Define
\begin{align*}
B_{\ve k}
:= B_{\ve k}\big(&[\ve c^0_1, \ldots, \ve c^0_m],
\\
&\ve c^1_1, \ldots, \ve c^{l_1}_1, \ldots, \ve c^1_m, \ldots, \ve c^{l_m}_m, \epsilon'/ \zeta_{\max}\big).
\end{align*}
In particular, $\Z^n \subseteq \bigcup_{\ve k \in \Z^{l_1 + \ldots + l_m}} B_{\ve k}$. Then for $K =   \{-N, \ldots, N\}^{\sum_{j=1}^m l_j}$ we have $P \cap \Z^n \subseteq \bigcup_{\ve k \in K} B_{\ve k}$.  
Notice that many choices of $\ve k \in K$ are redundant since each non-empty set $B_{\ve k}$ is a cell or facet of a cell of the hyperplane arrangement in $\R^n$ given by the hyperplanes $\ve c^i_j \cdot \ve x + c^0_j = 0$ and $\ve c^i_j \cdot \ve x + c^0_j = \pm(1+\epsilon'/\zeta_{\max})^k$ for $k \in \{0, \ldots, N\}$, $i=1, \ldots, l_j$, and $j=1, \ldots, m$.   
By \cite{sleumer_output_1998}, we can enumerate the $O \left( (2N + 3)^n \left( \sum_{j=1}^m l_j\right)^n \right)$ cells of the hyperplane arrangement in polynomial time with $n$ fixed.  Hence, we can find a subset $K' \subseteq K$ with cardinality of polynomial size such that $P \subseteq \bigcup_{\ve k \in K'} B_{\ve k}$.

Let $\ve x_\epsilon$ be as in Lemma~\ref{lem:main-tool}.
For $\ve k \in K'$, we determine a point $\ve x^{\ve k} \in P \cap B_{\ve k} \cap \Z^n$ using~\cite{lenstra_integer_1983}.  If no such point exists, then we remove $\ve k$ from $K'$.  Set $L_j^{\ve k} := \frac{1}{1 + \epsilon'} s_j(\ve x^{\ve k})$, computed using the evaluation oracle for $s_j$, for $j=1, \ldots, m$.
Note that $L_j^{\ve k}$ satisfies the assumptions of Lemma~\ref{lem:main-tool} because $s_j$ is sliceable and $\zeta_j \leq \zeta_{\max}$.
Then, by Lemma~\ref{lem:main-tool}, $\ve x_\epsilon$ is an $\epsilon$-approximate solution to Problem~\eqref{eq:problem-general}.

We show that we can compute $\ve x_\epsilon$ in polynomial time.  
For each $\ve k \in K'$, an $\epsilon'$-approximate solution $\ve{x}^{\ve k}_{\epsilon'}$ to
\begin{equation*}
\min \left\{ \sum_{j=1}^m g_j(\ve x) L_j^{\ve k} : \ve x \in P \cap B_{\ve k} \cap \Z^n \right\}
\end{equation*}
can be computed in polynomial time because $\Omega_{\mathcal P}$ is closed under addition and multiplication by positive scalars, and therefore $\sum_{j=1}^m g_j(\ve x) L_j^{\ve k} \in \Omega_{\mathcal P}$.
Finally, $\arg \min \{ f(\ve{x}^{\ve k}_{\epsilon'}) : \ve k \in K' \}$ can be computed in polynomial time using the evaluation oracle of $f$ because $|K'|$ is polynomial in the input size.
\end{proof}

\begin{Remark}
\label{rem:main-m=1}
The assumptions of Theorem~\ref{thm:sliceable-product-sum} can be slightly relaxed if we only consider functions of the form $f(\ve x) = g_1(\ve x) s_1(\ve x)$, i.e., $\Omega_{\mathcal P} = \{ \lambda g_1 : \lambda \ge 0 \}$ for some $g_1 \colon \R^n \to \R$ and $s_1$ sliceable.
In fact, in this case no evaluation oracle for $s_1$ is needed, because an $\epsilon'$-approximate solution to 
$$
\min \left\{ g_1(\ve x) : \ve x \in P \cap B_{\ve k} \cap \Z^n \right\}
$$
is also an $\epsilon'$-approximate solution to 
$$
\min \left\{ g_1(\ve x) L^{\ve k}_1 : \ve x \in P \cap B_{\ve k} \cap \Z^n \right\}
$$
for any $L^{\ve k}_1 \ge 0$. Thus, $L^{\ve k}_1$ does not need to be computed explicitly.
\end{Remark}

\begin{Remark}
Theorem~\ref{thm:sliceable-product-sum} can be generalized from bounded polyhedra $P$ to more general sets $C$, provided that for any bounded polyhedron $\bar P$ the feasibility and minimization problems on $C \cap \bar P \cap \Z^n$ and the feasibility problem on $C \cap \Z^n$ can be solved in polynomial time in the size of $\bar P$ and the bit size required to represent $C$. The class of convex semi-algebraic sets in fixed dimension is an example of such a class, because for fixed dimension the stated feasibility problem can be solved in polynomial time by \cite{khackiyan_integer_2000}.
\end{Remark}

\begin{Remark}
The size of a sliceable function $s$ is defined as the sum of the sizes of its slicing parameters, where unary encoding is assumed for the parameter $\zeta$. With this definition, a polynomial bound on the cardinality of $K'$ in terms of the sizes of the sliceable functions is derived in the proof of Theorem~\ref{thm:sliceable-product-sum}.
This bound is not polynomial if binary encoding is assumed for the $\zeta$ parameters.
However, in our main application of  Theorem~\ref{thm:sliceable-product-sum}, namely the proof of Theorem~\ref{thm:main}, those parameters can be chosen to be constant.
\end{Remark}

\section{FPTAS for Quadratic Forms}
\label{sec:quadratics}

For positive definite matrices, it is well known that a Cholesky decomposition $Q = L D L^T$ can be easily computed, where $L$ is a lower-triangular matrix and $D$ is a diagonal matrix.  Such a decomposition does not always exist for indefinite symmetric matrices~\cite{dax_pivoting_1977}.
However, there is a similar decomposition that serves our purpose, as described in the following remark.
\begin{Remark}[\cite{dax_pivoting_1977, hartung_computational_2007}]
\label{rem:indefinite-decomposition}
For any symmetric matrix $Q \in \Z^{n \times n}$, we can find an invertible matrix $S \in \Q^{n \times n}$ and a diagonal matrix $D \in \Q^{n \times  n}$ such that $Q = S D S^T$ in polynomial time in the size of $Q$ for fixed $n$.
\end{Remark}
We thus have 
\begin{equation*}
\ve x^T Q \ve x = \sum_{i=1}^n d_i (S_i \cdot \ve x)^2
\end{equation*}
where $D = \mathrm{diag}(\ve d)$ is the diagonal matrix with $D_{ii} = d_i$, and $S_i$ denotes the $i$-th row of $S$.

\begin{Remark}
\label{rem:FPTAS-scaling}
Since an FPTAS for $c^3 f(\ve x)$, $c \in \Q_{> 0}$, is also an FPTAS for $f(\ve x)$ as long as $c$ is of polynomial size in the input size, $S$ and $D$ can be scaled to be in $\Z^{n \times n}$. In fact, let $c$ be the least common multiple of the denominators of all entries of $S$ and $D$, which is of polynomial size in the size of $Q$, because $S$ and $D$ can be computed in polynomial time in the size of $Q$. Then $c^3 f(\ve x) = \ve x^T (cS) (cD) (cS)^T \ve x$ and $cS, cD \in \Z^{n \times n}$.
\end{Remark}

Since $S$ is non-singular, Sylvester's law of inertia
(see, for example \cite{golub_matrix_1996} or \cite{gill_numerical_1991})
states that the inertia of $Q$ is the same as the inertia of $D$. Therefore $D$ has exactly as many positive, negative and zero diagonal entries as $Q$ has positive, negative and zero eigenvalues.

The following lemmas will be useful for determining the accuracy necessary for the numerical approximations.
\begin{lemma}[see, e.g.,~\cite{delpia_minimizing_2015}]
\label{thm:polynomial_root_bounds}
Let $m < n$ be nonnegative integers, $a_m, \ldots, a_n \in \R$, $a_m \ne 0$, $a_n \ne 0$, and let $\bar x \in \R$ be a nonzero root of the polynomial $f(x) := \sum_{i=m}^n a_i x^i$. Then 
$|\bar x| > \min \{ |a_m|/(|a_m| + |a_i|) \, : \, i = m+1, \ldots, n \}$.
\end{lemma}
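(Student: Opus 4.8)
The plan is to reduce to the classical problem of lower-bounding the smallest-modulus root of a polynomial with a nonzero constant term, and then to combine the triangle inequality with a geometric-series estimate in a proof by contradiction.

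First I would use $\bar x \neq 0$ to divide out the lowest power: writing $f(x) = x^m g(x)$ with $g(x) := \sum_{i=m}^{n} a_i x^{i-m}$, the hypothesis $f(\bar x) = 0$ forces $g(\bar x) = 0$, that is, $a_m = -\sum_{i=m+1}^{n} a_i \bar x^{i-m}$. By the triangle inequality this gives
\[
|a_m| \;\le\; \sum_{i=m+1}^{n} |a_i|\,|\bar x|^{i-m},
\]
and this is the only place where the root property is used.

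Next I would argue by contradiction. Suppose $|\bar x| \le r$, where $r := \min\{\, |a_m|/(|a_m|+|a_i|) \,:\, i = m+1,\dots,n \,\}$. Since $a_n \neq 0$, the term with $i=n$ is strictly smaller than $1$, so $r < 1$; together with $\bar x \neq 0$ this yields $0 < |\bar x| < 1$. From $|\bar x| \le |a_m|/(|a_m|+|a_i|)$ we get $|a_i|\,|\bar x| \le |a_m|(1-|\bar x|)$ for each $i = m+1,\dots,n$, and substituting this into the displayed inequality (after pulling one factor $|\bar x|$ out of each term) gives
\[
|a_m| \;\le\; \sum_{i=m+1}^{n} |a_m|\,(1-|\bar x|)\,|\bar x|^{i-m-1} \;=\; |a_m|\,(1-|\bar x|)\sum_{j=0}^{n-m-1} |\bar x|^{j}.
\]

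Finally I would sum the finite geometric series: since $|\bar x| \neq 1$, the right-hand side equals $|a_m|\bigl(1 - |\bar x|^{n-m}\bigr)$, and since $\bar x \neq 0$ we have $|\bar x|^{n-m} > 0$, so the right-hand side is strictly less than $|a_m|$ --- a contradiction. Hence $|\bar x| > r$, which is exactly the assertion. I do not expect any genuine obstacle: the whole argument is elementary. The only points that need a little care are that $0 < |\bar x| < 1$ holds under the contradiction hypothesis --- for which the hypothesis $a_n \neq 0$ (forcing $r < 1$) is essential --- and the strictness of the last inequality, which comes from $\bar x \neq 0$.
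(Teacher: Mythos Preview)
Your argument is correct: the reduction to $g(\bar x)=0$, the triangle-inequality bound, the substitution $|a_i|\,|\bar x|\le |a_m|(1-|\bar x|)$, and the geometric-series evaluation all go through exactly as you claim, and the strictness comes out because $0<|\bar x|<1$ under the contradiction hypothesis. The paper does not supply its own proof of this lemma---it is simply quoted with a reference---so there is no in-paper argument to compare against; your self-contained proof is a suitable substitute.
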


The proof of the next lemma uses a standard technique for converting an equation with rational powers to one with integer powers.
\begin{lemma}
\label{lem:difference}
Let $p,q,p',q' \in \Z$ with $|p|, |q|, |p'|, |q'| \leq R$, $p,p' \geq 0$, and let $\delta = (\sqrt{p} + q) - (\sqrt{p'} + q')$.  If $\delta \neq 0$, then $|\delta| >  \tfrac{1}{5\cdot 48} R^{-4}$.
\end{lemma}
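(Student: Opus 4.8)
The plan is to clear all square roots and reduce to an application of Lemma~\ref{thm:polynomial_root_bounds} (the polynomial root bound) applied to a suitable auxiliary polynomial with integer coefficients bounded in terms of $R$. Write $\delta = (\sqrt p + q) - (\sqrt{p'} + q')$ and set $c := q - q' \in \Z$, so that $\delta = \sqrt p - \sqrt{p'} + c$. The idea is to view $\delta$ as a root of a low-degree polynomial obtained by repeated isolation and squaring: from $\sqrt p = \delta + \sqrt{p'} - c$ we square to get $p = \delta^2 - 2c\delta + c^2 + p' + 2(\delta - c)\sqrt{p'}$, isolate the remaining radical as $2(\delta-c)\sqrt{p'} = p - p' - c^2 - \delta^2 + 2c\delta$, and square once more. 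This yields a polynomial identity $f(\delta) = 0$ where $f$ is a degree-$4$ polynomial in $\delta$ whose coefficients are integers that are polynomial expressions in $p,p',c$, hence bounded in absolute value by an explicit constant times $R^2$ (the coefficients involve products like $c^2$, $p-p'-c^2$, and their squares, so a bound on the order of $R^4$ for the worst coefficient after expansion, but the constant-term is what matters below).

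Next I would handle the degenerate cases separately so that Lemma~\ref{thm:polynomial_root_bounds} applies cleanly. If $\delta - c = 0$, i.e.\ $\sqrt p = \sqrt{p'}$, then actually $\delta = \sqrt p - \sqrt{p'} + c = c$, and $\delta = c$ is an integer; since $\delta \neq 0$ we get $|\delta| \geq 1$, which is far stronger than the claimed bound. Otherwise $\delta \neq c$, so dividing the pre-squaring identity by $2(\delta - c)$ was legitimate, and $f(\delta) = 0$ with $f$ the explicit quartic. To invoke Lemma~\ref{thm:polynomial_root_bounds} I need the lowest-order coefficient $a_m$ of $f$ to be nonzero; if the constant term $f(0)$ vanishes, one factors out the largest power of $\delta$ dividing $f$, and the new lowest coefficient is still an integer bounded by a constant times $R^4$. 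Then Lemma~\ref{thm:polynomial_root_bounds} gives $|\delta| > \min\{ |a_m|/(|a_m| + |a_i|)\} \geq 1/(1 + \max_i |a_i|) \geq 1/(1 + C R^4)$ for the appropriate explicit constant $C$, and since $|a_m| \geq 1$ (it is a nonzero integer) one gets $|\delta| > 1/(C' R^4)$. The remaining work is to track the constants carefully through the two squarings so that $C'$ comes out to be at most $5 \cdot 48 = 240$, matching the stated bound $|\delta| > \frac{1}{5 \cdot 48} R^{-4}$.

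The main obstacle I anticipate is \emph{bookkeeping of the constants}: one must expand $f$ honestly, identify which coefficient is the relevant lowest-order nonzero one in each case (constant term nonzero vs.\ $\delta \mid f$ vs.\ $\delta^2 \mid f$), and verify that $|a_m| + |a_i| \leq 240\,R^4$ in every case, using $p,p' \geq 0$ and $|p|,|q|,|p'|,|q'| \leq R$ together with $|c| = |q - q'| \leq 2R$. A secondary subtlety is ensuring $f \not\equiv 0$ as a polynomial in $\delta$ — this holds because the degree-$4$ coefficient is $1$ after the final squaring (the $\delta^4$ term comes from squaring $\delta^2$), so $f$ is genuinely a degree-$4$ polynomial and Lemma~\ref{thm:polynomial_root_bounds} applies with $n = 4$. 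Once the constant is shown to be $\leq 240$, the lemma follows; one should also note that if $\min$ in Lemma~\ref{thm:polynomial_root_bounds} is attained with $|a_m|$ large this only helps, so the worst case is indeed $|a_m| = 1$, giving the clean denominator $1 + \max_i|a_i|$.
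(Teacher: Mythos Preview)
Your proposal is correct and follows essentially the same approach as the paper: isolate $\sqrt{p}$, square, then isolate the remaining $\sqrt{p'}$ term and square again to obtain a degree-$4$ integer polynomial in $\delta$ with coefficients bounded by $48R^4$, and invoke Lemma~\ref{thm:polynomial_root_bounds}. If anything, you are more careful than the paper about the degenerate case $\delta = c$ and about the possibility that the constant term vanishes; the paper simply asserts the coefficient bound and applies the root-bound lemma directly.
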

\begin{proof}
We show that any solution $\delta$ is a solution to a polynomial equation in terms of $\delta$.  Rewriting the equation for $\delta$ by solving for $\sqrt{p}$ and squaring both sides yields
$p = \delta^2 + p' + q^2 - 2 q q' + q'^2 + \delta (-2 q + 2 q') + \sqrt{p'} (2 \delta - 2 q + 2 q')$.
Solving for $\sqrt{p'}(2 \delta - 2 q + 2 q')$ and squaring both sides yields a degree 4 polynomial in $\delta$ with integer coefficients that are polynomial in $R$.    In particular, it can be shown the the absolute values of the coefficients are upper bounded by $48 R^4$.  Since $\delta$ is a solution to this polynomial equation, by Lemma~\ref{thm:polynomial_root_bounds}, it follows that if $\delta \neq 0$, then $|\delta| >  \tfrac{1}{5\cdot 48} R^{-4}$.
\end{proof}

With those technical tools, we can now prove Theorem~\ref{thm:main}.
\begin{proof}[Proof of Theorem~\ref{thm:main}]
Given $Q \in \Z^{n \times n}$, it is possible to compute $S$ and $D = \mathrm{diag}(\ve d)$ such that $Q = S D S^T$ in polynomial time by Remark~\ref{rem:indefinite-decomposition}.
Moreover, $S$ and $D$ can be assumed to be integral by Remark~\ref{rem:FPTAS-scaling}.
Suppose that $Q$ has at most one negative eigenvalue, which implies that $-Q$ has at most one positive eigenvalue.
By reordering variables, we may assume that $d_i \ge 0$ for $i = 1, \ldots, n-1$.
Note that if $d_n \ge 0$, then $f_1(\ve x) = \ve x^T Q \ve x$ is convex and $f_2(\ve x) = \ve x^T (-Q) \ve x$ is concave, so Problem~\eqref{eq:problem-general} with $f = f_1$ and $f = f_2$ can be solved in polynomial time by Theorem~\ref{thm:convex-concave} (\ref{itm:quasi-convex-semi-algebraic}) and (\ref{itm:quasi-concave}) respectively.
Thus assume henceforth that $d_n < 0$.

Let 
\begin{equation*}
g(\ve x) := \sqrt{\sum_{i=1}^{n-1} d_i (S_i \cdot \ve x)^2} -  \sqrt{- d_n} |S_n \cdot \ve x|
\end{equation*}
and
\begin{equation*}
s(\ve x) := \sqrt{\sum_{i=1}^{n-1} d_i (S_i \cdot \ve x)^2} +  \sqrt{- d_n} |S_n \cdot \ve x| .
\end{equation*}
Notice that $f_1(\ve x) = g(\ve x) s(\ve x)$ and $f_2(\ve x) = (-g(\ve x)) s(\ve x)$.  
Let $\mathcal P := \{ \bar P \text{ polytope } : \bar P \subseteq P\}$ and $\Omega_{\mathcal P} := \{ \lambda g : \lambda\in \R\}$.
We will show that $s$ is sliceable with size bounded by a polynomial in the size of $Q$, and that $\Omega_{\mathcal P}$ is a class of functions that we can minimize over the integers in polyhedra $\bar P \in \mathcal P$.  Equivalently, we show that we can minimize $g$ and $-g$ over $\bar P \cap \Z^n$ for $\bar P \in \mathcal P$.   The result then follows from Theorem~\ref{thm:main} and Remark~\ref{rem:main-m=1}.

First,  we define the slicing parameters $\ve c^j = S_j$ for $j = 1, \ldots, n$, $\ve c^0 = 0$, and $\zeta = 1$.  
Then for any $\ve k \in \Z^n$, $\epsilon > 0$, and any $\ve x, \ve y \in B_{\ve k}(\ve c^0, \ve c^1, \ldots, \ve c^n, \epsilon)$, 
we have
\begin{align*}
s(\ve x) 
&=  \sqrt{\sum_{i=1}^{n-1} d_i (S_i \cdot \ve x)^2} +  \sqrt{- d_n} |S_n \cdot \ve x|
\\
&\leq \sqrt{\sum_{i=1}^{n-1} d_i ((1+\epsilon) S_i \cdot \ve y)^2} +  \sqrt{- d_n} |(1+\epsilon) S_n \cdot \ve y|
\\
&\leq (1+\epsilon) s(\ve y) .
\end{align*}
Hence, $s$ is sliceable with size polynomial in the size of $Q$.

Second, we show that we can solve $\min\{g(\ve x) : \ve x \in \bar P \cap \Z^n\}$ exactly for any $\bar P \in \mathcal P$.
The function $g$ is convex on the halfspaces $S_n \cdot \ve x \ge 0$ and $S_n \cdot \ve x \le 0$, because it is the sum of a norm with a linear function.  Even though $g$ is convex, an exact first order oracle is not available because the function values can be irrational.  We show the minimization problem can be solved by reformulating sublevel sets  as convex semi-algebraic sets.  

We separately consider the problem on the halfspaces $S_n \cdot \ve x \ge 0$ and $S_n \cdot \ve x \le 0$.  The analysis for both cases is similar.  Without loss of generality, assume that $S_n \cdot \ve x \geq 0$.  

For $\beta \in \R$, the inequality  $g(\ve x) \le \tfrac{\beta}{\sqrt{-d_n}}$ is equivalent to
\begin{equation*}
\sqrt{|d_n| \sum_{i=1}^{n-1}  d_i (S_i \cdot \ve x)^2} \le \beta + |d_n| S_n \cdot \ve x.
\end{equation*}
This is a second order cone constraint, which is well known to define a convex set~\cite{Boyd2004}.
It can be reformulated by recognizing that the left hand side is nonnegative and adding the condition that the right hand side is nonnegative too. With this, we can square both sides to get the equivalent relation 
\begin{align}
\label{eq:convex-set}
\begin{split}
|d_n| \sum_{i=1}^{n-1}  d_i (S_i \cdot \ve x)^2 -  (\beta + |d_n| S_n \cdot \ve x)^2 &\le 0,
\\
\beta + |d_n| S_n \cdot \ve x &\ge 0 .
\end{split}
\end{align}
In this representation, the set $\{\ve x \in \R^n : g(\ve x) \leq \tfrac{\beta}{\sqrt{-d_n}}, \ S_n \cdot \ve x \geq 0\}$ is a convex semi-algebraic set. It is thus possible to test integer feasibility of this set for $\beta \in \Q$ in polynomial time using Theorem~\ref{thm:convex-concave} (\ref{itm:quasi-convex-semi-algebraic}).  
Since $\bar P$ is bounded, and $S$ and $D$ are of polynomial size, it follows from~\cite{schrijver_theory_1986} that we can compute $R \in \Z$, $R \ge \|\ve d\|_\infty$ of polynomial size such that $\| S \ve x \|_\infty \leq R$ for all $\ve x \in \bar P$. 
Note that $|\sqrt{-d_n} \, g(\ve x)| \leq 2nR^3$ for all $\ve x \in \bar P$.  
We now perform binary search on $\beta$ in the interval $[-2nR^3, 2nR^3]$ for integer feasibility of~\eqref{eq:convex-set} with precision $\mu := \tfrac{1}{5\cdot 48} n^{-4}R^{-16}$.  Since these numbers are of polynomial size, the binary search can be done in polynomial time.
We show now that the choice of precision yields an optimal solution by showing that it distinguishes function values of $\sqrt{-d_n}\, g(\ve x)$ for  $\ve x \in \bar P \cap\Z^n$.  

Indeed, consider $\ve x,\ve y \in \bar P \cap \Z^n$ and define $\delta := \sqrt{-d_n}( g(\ve x) - g(\ve y))$.  
Setting 
$p = |d_n| \sum_{i=1}^{n-1} d_i (S_i \cdot \ve x)^2$, $q =  - |d_n| S_n \cdot \ve x$, 
$p' = |d_n| \sum_{i=1}^{n-1} d_i (S_i \cdot \ve y)^2$, and $q' = -|d_n| S_n \cdot \ve y$, 
 we see that $\delta = (\sqrt{p} + q) - (\sqrt{p'} + q')$.  
Since $p, p' \geq 0$ and $|p|,|p'|,|q|,|q'| \leq n R^4$, by Lemma~\ref{lem:difference}, if $\delta \neq 0$, then $|\delta| > \mu$.
Thus our choice of the precision for the binary search distinguishes function values of $\sqrt{-d_n} g(\ve x)$ for $\ve x \in \bar P$
 and hence we arrive at an exact algorithm that runs in polynomial time.  
  Note also, that this implies that the precision  $\mu/R$ distinguishes function values of $g(\ve x)$ for $\ve x \in \bar P$.

Lastly, we show that we can solve $\min\{-g(\ve x) : \ve x \in \bar P \cap \Z^n\}$ for any $\bar P \in \mathcal P$.
Since $g$ is convex, $-g$ is concave and hence is also quasi-concave.   By Theorem~\ref{thm:convex-concave} (\ref{itm:quasi-concave}), the minimization problem can be solved exactly provided that we have a comparison oracle for $-g$.  Our comparison oracle is realized by approximating $-g$ to a precision of $\mu/R$, which, as stated above, distinguishes function values of $g$.
This precision can be obtained by approximating the square roots in $g$ using, for instance binary search.  See also~\cite{mansour_complexity_1989} for approximation of square roots.

Thus, we have shown that the set $\Omega_{\mathcal P}$ is indeed a class of functions that we can minimize in polynomial time over polytopes $\bar P \in \mathcal P$.
\end{proof}

\begin{Remark}
In the proof of Theorem~\ref{thm:main}, there is an alternative method that avoids computing solutions to $\min \{ -g(\ve x) \ :\ x \in \bar P \cap \Z^n \}$,  thus avoiding potential numerical issues when approximating square roots.
In fact, let $\ve{\bar x} \in \arg \min \{ f(\ve x) : \ve x \textrm{ is a vertex of } (\bar P \cap \Z^n)_I \}$ and $\ve{\tilde x} \in \arg \min \{ -g(\ve x) \ :\ x \in \bar P \cap \Z^n \}$. Since $-g$ is concave, its optimal value over $\bar P \cap \Z^n$ is attained at a vertex of $(\bar P \cap \Z^n)_I$. Therefore, because $\ve{\bar x}$ minimizes $f$ over the vertices of $(\bar P \cap \Z^n)_I$, we have $f(\ve{\bar x}) \le f(\ve{\tilde x})$.  
\end{Remark}

Finally, we mention that for certain cases, we can obtain exact solutions to Problem~\eqref{eq:problem-general} for $f(\ve x) = \ve x^T Q \ve x$ where $Q$ has at most one negative or at most one positive eigenvalue.  For this, we use the following lemma.

\begin{lemma}
\label{lem:f_quasiconvex}
Let $S \in \Z^{n \times n}$ be non-singular and $D = \mathrm{diag}(\ve d)$ with $\ve d \in \Z^{n-1}_{\ge 0} \times \Z_{< 0}$.
Then $f(\ve x) := \ve x^T S D S^T \ve x$ is quasi-convex on the sets $C_1 := \{ \ve x \in \R^n \,:\, f(\ve x) \le 0,\ S_n \cdot \ve x \ge 0 \}$ and $C_2 := \{ \ve x \in \R^n \,:\, f(\ve x) \le 0,\ S_n \cdot \ve x \le 0 \}$.
\end{lemma}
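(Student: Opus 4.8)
The plan is to recognize that on each of the sets $C_1$, $C_2$ the quadratic $f$ factors through the function $g$ introduced in the proof of Theorem~\ref{thm:main}, and that the sublevel sets of $f$ coincide there with sublevel sets of $g$, which we already know are convex (they are second order cone constraints, cf.~\eqref{eq:convex-set}). Concretely, write $f(\ve x) = g(\ve x)\,s(\ve x)$ where $g(\ve x) = \sqrt{\sum_{i=1}^{n-1} d_i (S_i\cdot\ve x)^2} - \sqrt{-d_n}\,|S_n\cdot\ve x|$ and $s(\ve x) = \sqrt{\sum_{i=1}^{n-1} d_i (S_i\cdot\ve x)^2} + \sqrt{-d_n}\,|S_n\cdot\ve x|$. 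Since $s \ge 0$ always, on the region $\{f \le 0\}$ we have $g \le 0$ (when $s > 0$; the locus $s = 0$ forces $g = 0$ too and is handled directly). So $C_1 = \{\ve x : g(\ve x) \le 0,\ S_n\cdot\ve x \ge 0\}$ and similarly for $C_2$. On the halfspace $S_n\cdot\ve x \ge 0$ the absolute value $|S_n\cdot\ve x|$ becomes the linear form $S_n\cdot\ve x$, so $g$ restricted to this halfspace is a norm minus a linear function, hence convex there, as already observed in the proof of Theorem~\ref{thm:main}; likewise on $S_n\cdot\ve x \le 0$.

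Next I would establish quasi-convexity of $f$ on $C_1$ directly from the definition: fix $\alpha \in \R$ and show $\{\ve x \in C_1 : f(\ve x) \le \alpha\}$ is convex. If $\alpha \ge 0$ this set is all of $C_1$, and $C_1$ itself is convex because it is the intersection of the halfspace $\{S_n\cdot\ve x\ge 0\}$ with the convex sublevel set $\{\ve x : g(\ve x)\le 0,\ S_n\cdot\ve x\ge 0\}$ (use the second order cone reformulation~\eqref{eq:convex-set} with $\beta = 0$). If $\alpha < 0$, then on $C_1$ the constraint $f(\ve x)\le\alpha$ together with $g(\ve x)\le 0 \le s(\ve x)$ can be translated, after squaring the cone inequality as in~\eqref{eq:convex-set}, into a single polynomial inequality that is again a (rotated) second order cone constraint in the variables $(S_1\cdot\ve x,\dots,S_n\cdot\ve x)$ on the halfspace $S_n\cdot\ve x\ge 0$: indeed $f(\ve x) = \big(\sqrt{\textstyle\sum_{i<n} d_i(S_i\cdot\ve x)^2}\big)^2 - (-d_n)(S_n\cdot\ve x)^2$, so $f(\ve x)\le\alpha$ with $S_n\cdot\ve x\ge 0$ is equivalent to $\sum_{i<n} d_i(S_i\cdot\ve x)^2 \le \alpha + (-d_n)(S_n\cdot\ve x)^2$ with the right side automatically nonnegative on $C_1$ since $g\le 0$ gives $(-d_n)(S_n\cdot\ve x)^2 \ge \sum_{i<n}d_i(S_i\cdot\ve x)^2 \ge 0 \ge \alpha$; this is the region between the branches of a cone, intersected with the halfspace containing one branch, which is convex. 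The argument for $C_2$ is identical after replacing $S_n$ by $-S_n$.

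The step I expect to require the most care is the bookkeeping at the degenerate locus $s(\ve x) = 0$, equivalently $S_i\cdot\ve x = 0$ for all $i$ with $d_i > 0$ and $S_n\cdot\ve x = 0$: there $f = g = 0$, so this locus lies in every set under consideration and one must check it does not break convexity of the sublevel sets — it does not, since it is a linear subspace sitting inside the convex regions already described. A secondary subtlety is making sure the "right-hand side is nonnegative" bookkeeping in passing from the second order cone inequality to its squared polynomial form is valid throughout $C_1$ (not just where we squared it); this is exactly why we restrict to the halfspaces $S_n\cdot\ve x\ge 0$ and $S_n\cdot\ve x\le 0$ separately, so that $|S_n\cdot\ve x|$ is linear and the sign of $\beta + |d_n| S_n\cdot\ve x$ (here with $\beta$ replaced by the appropriate shift coming from $\alpha$) is controlled. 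Once these two points are checked, quasi-convexity on $C_1$ and $C_2$ follows immediately.
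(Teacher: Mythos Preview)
Your approach is correct and genuinely different from the paper's. You verify quasi-convexity by directly showing that each sublevel set $\{\ve x \in C_1 : f(\ve x) \le \alpha\}$ is convex: for $\alpha \ge 0$ it is all of $C_1$ (a second-order cone), and for $\alpha < 0$ it is one sheet of the two-sheeted hyperboloid $\{f \le \alpha\}$, which on the halfspace $S_n \cdot \ve x \ge 0$ can be rewritten as $\bigl\{\ve x : \sqrt{-\alpha + \sum_{i<n} d_i (S_i \cdot \ve x)^2} \le \sqrt{-d_n}\,(S_n \cdot \ve x)\bigr\}$, a sublevel set of a convex function (norm of an affine map minus a linear form). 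The paper instead restricts $f$ to the segment between two points of $\intr(C_1)$, obtaining a univariate quadratic $g(\lambda) = f(\lambda \ve x + (1-\lambda)\ve y)$, splits into the cases where $g$ is convex (immediate) or strictly concave, and in the concave case derives a contradiction from the fact that the pointed cone $C_1$ contains no line; it then passes from $\intr(C_1)$ to $C_1$ by continuity. Your route is more direct and sidesteps both the interior-to-closure argument and the reduction to $d_i \ne 0$, at the cost of relying on the concrete algebraic form of $f$ rather than a structural property of the cone. Two small cleanups for the write-up: the phrase ``region between the branches of a cone'' is misleading---you mean one sheet of the hyperboloid lying inside one nappe of the cone $\{f=0\}$---and the nonnegativity of $\alpha + (-d_n)(S_n\cdot\ve x)^2$ on the sublevel set does not follow from $g \le 0$ as you wrote, but directly from $f(\ve x) \le \alpha$ itself, since $(-d_n)(S_n\cdot\ve x)^2 = \sum_{i<n} d_i(S_i\cdot\ve x)^2 - f(\ve x) \ge -\alpha$.
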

\begin{proof}
We show this for the set $C_1$; the proof for the set $C_2$ is similar.
We only need to consider the case where $S$ is the identity matrix, because convexity is preserved under linear transformations.

$C_1 = \{ \ve x \in \R^n \,:\, \sqrt{\sum_{i=1}^{n-1} d_i (x_i)^2} \le \sqrt{- d_n} x_n,\ x_n \ge 0 \}$ is a convex because it is a second-order cone~\cite{Boyd2004}. In particular, its interior, which is given by $\intr(C_1) = \{ \ve x \in \R^n \,:\, f(\ve x) < 0,\ x_n \ge 0 \}$, is convex.
 
Note that if $d_i = 0$, then $f$ is quasi-convex on some set $C$ if and only if the restriction of $f$ to $x_i = 0$ is quasi-convex on $C \cap \{\ve x \in \R^n: x_i = 0 \}$. Therefore, by reduction of variables, assume without loss of generality that $d_i \ne 0$ for $i = 1, \ldots, n$.

Since $d_i \ne 0$ for $i = 1, \ldots, n$, $C_1$ is a pointed cone and therefore does not contain any lines.
Let $\ve x, \ve y \in \intr(C_1)$ and $\lambda \in (0, 1)$. We will show that $f(\lambda \ve x + (1-\lambda) \ve y) \le \max \{ f(\ve x), f(\ve y)\}$, thus proving quasi-convexity of $f$ on $\intr(C_1)$.
Let
\begin{align*}
g(\lambda)
&:= f(\lambda \ve x + (1-\lambda) \ve y)
\\
&\phantom{:}= \lambda^2 f(\ve x) + (1- \lambda)^2 f(\ve y) \\
&\phantom{:=} + 2 \lambda (1-\lambda) \sum_{i = 1}^n d_i (S_i \cdot \ve x) (S_i \cdot \ve y) ,
\end{align*}
and note that $g$ is a quadratic function in $\lambda$ and satisfies $g(0) = f(\ve x)$ and $g(1) = f(\ve y)$.
If $g$ is convex, then the result follows by convexity.
Otherwise, $g$ is strictly concave.
Assume without loss of generality that $f(\ve x) \ge f(\ve y)$.
We will show that $g$ is increasing on the interval $[0, 1]$, thus proving the result.

Assume for the sake of contradiction that $g$ is not increasing on the interval $[0, 1]$ and therefore has a local maximum on this interval, say for $\hat \lambda$. Since $g$ is quadratic and strictly concave, $\hat \lambda$ is also its global maximum. By convexity of $\intr(C_1)$, $\hat \lambda \ve x + (1 - \hat \lambda) \ve y \in \intr(C_1)$ and thus $g(\hat \lambda) < 0$.
However, $\intr(C_1)$ does not contain any line. Therefore, there is a $\bar \lambda \in \R$ such that $\bar \lambda \ve x + (1-\bar \lambda) \ve y \not \in \intr(C_1)$, implying $f(\bar \lambda \ve x + (1-\bar \lambda) \ve y) \ge 0$ because $f$ takes the value 0 on the boundary of $C_1$. By maximality of $g(\hat \lambda)$, this implies that $g(\hat \lambda) \ge 0$, a contradiction.
Therefore, $f$ is quasi-convex on $\intr(C_1)$.

Finally, if a continuous function is continuous on the closure of a convex set $C$ and quasi-convex on its interior, it is quasi-convex on the closure of $C$ (see, for example, Theorem~2.2.12 in \cite{cambini_generalized_2009}). Therefore $f$ is quasi-convex in $C_1$.
\end{proof}

\begin{proposition}
\label{prop:main}
Let $Q \in \Z^{n \times n}$ be a symmetric matrix and let $n$ be fixed. Then there is a polynomial time algorithm to solve Problem~\eqref{eq:problem-general} with $f(\ve x) = \ve x^T Q \ve x$ in the following cases:
\begin{enumerate}[(i)]
\item $Q$ has exactly one negative eigenvalue and $f(\ve x_{\opt}) \leq 0$;
\label{itm:prop_min}
\item $Q$ has exactly one positive eigenvalue and $f(\ve x_{\opt}) \geq 0$.
\label{itm:prop_max}
\end{enumerate}
Furthermore, in polynomial time, we can detect if one of these cases occurs. 
\end{proposition}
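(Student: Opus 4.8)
The plan is to reduce both cases to the algorithmic primitives of Theorem~\ref{thm:convex-concave}, using the (quasi-)convexity statements of Lemma~\ref{lem:f_quasiconvex} and a split of the feasible set according to the sign of the linear form $S_n \cdot \ve x$. Throughout, I would first compute an integral decomposition $Q = S D S^T$ with $S \in \Z^{n\times n}$ invertible and $D = \mathrm{diag}(\ve d)$ via Remarks~\ref{rem:indefinite-decomposition} and~\ref{rem:FPTAS-scaling}; note that $f(\ve x) = \ve x^T Q \ve x$ is integer-valued on $\Z^n$, so a comparison oracle for $f$ is available at no cost, and infeasibility of $P \cap \Z^n$ is detected by~\cite{lenstra_integer_1983}.

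For case~(\ref{itm:prop_min}), reorder variables so that $d_1, \dots, d_{n-1} \ge 0$ and $d_n < 0$, and set $C_1 := \{\ve x \in \R^n : f(\ve x) \le 0,\ S_n \cdot \ve x \ge 0\}$ and $C_2 := \{\ve x \in \R^n : f(\ve x) \le 0,\ S_n \cdot \ve x \le 0\}$, so that $\{\ve x : f(\ve x)\le 0\} = C_1 \cup C_2$. Each $C_j$ is a bounded \emph{convex} semi-algebraic set (a second-order cone) given by one quadratic and one linear inequality with integer coefficients of polynomial size, and by Lemma~\ref{lem:f_quasiconvex} the polynomial $f$ is quasi-convex on $C_1$ and on $C_2$; hence Theorem~\ref{thm:convex-concave}(\ref{itm:quasi-convex-semi-algebraic}), applied to the feasible region $P \cap C_j$, computes $m_j := \min\{f(\ve x) : \ve x \in P \cap C_j \cap \Z^n\}$ and a minimizer in polynomial time for $j = 1, 2$. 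Since $f(\ve x_\opt) \le 0$ forces $\ve x_\opt \in C_1 \cup C_2$, the smaller of $m_1, m_2$ equals $f(\ve x_\opt)$.

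For case~(\ref{itm:prop_max}), $-Q$ has exactly one negative eigenvalue; reorder so that, in $-Q = S(-D)S^T$, we have $-d_1, \dots, -d_{n-1} \ge 0$ and $-d_n < 0$. Let $H_1 := \{\ve x : S_n \cdot \ve x \ge 0\}$ and $H_2 := \{\ve x : S_n \cdot \ve x \le 0\}$; applying Lemma~\ref{lem:f_quasiconvex} to $-Q$ shows that $-f$ is quasi-convex on $C_j := \{\ve x : f(\ve x) \ge 0\} \cap H_j$, i.e.\ $f$ is quasi-concave on $C_1$ and $C_2$. The key point is that the hypothesis $f(\ve x_\opt) \ge 0$ says $f \ge 0$ on every integer point of $P$, so every integer point of the \emph{polyhedron} $P \cap H_j$ already lies in $\{\ve x : f(\ve x) \ge 0\} \cap H_j = C_j$; therefore the integer hull $(P \cap H_j)_I$ is contained in the convex set $C_j$, and $f$ is quasi-concave on $(P \cap H_j)_I$. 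Consequently Theorem~\ref{thm:convex-concave}(\ref{itm:quasi-concave}), using the comparison oracle for $f$, solves $\min\{f(\ve x) : \ve x \in P \cap H_j \cap \Z^n\}$ in polynomial time for $j = 1, 2$, and since $S_n \cdot \ve x_\opt$ has a definite sign, the better of the two answers equals $f(\ve x_\opt)$ and yields a minimizer.

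Whether one of the two cases occurs is detected as follows: compute the inertia of $Q$ (polynomial time). If $Q$ has exactly one negative eigenvalue, case~(\ref{itm:prop_min}) applies precisely when some integer point of $P$ satisfies $f \le 0$, i.e.\ when $P \cap C_1$ or $P \cap C_2$ has an integer point — an integer feasibility test for a convex semi-algebraic set, decidable in polynomial time by Theorem~\ref{thm:convex-concave}(\ref{itm:quasi-convex-semi-algebraic}) (or~\cite{khackiyan_integer_2000}). If $Q$ has exactly one positive eigenvalue, case~(\ref{itm:prop_max}) applies precisely when $f \ge 0$ on $P \cap \Z^n$; to decide this, run the FPTAS of Theorem~\ref{thm:main}(\ref{itm:main_max}) with $\epsilon = \tfrac12$ to obtain a feasible $\ve x_\epsilon$, and note that $f(\ve x_\opt) < 0$ would force $f(\ve x_\opt) \le -1$ by integrality and hence $f(\ve x_\epsilon) \le \tfrac{1}{1+\epsilon} f(\ve x_\opt) < 0$, so case~(\ref{itm:prop_max}) holds if and only if $f(\ve x_\epsilon) \ge 0$. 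I expect the main obstacle to be the asymmetry between the two cases: case~(\ref{itm:prop_min}) genuinely requires the semi-algebraic machinery of Theorem~\ref{thm:convex-concave}(\ref{itm:quasi-convex-semi-algebraic}), together with the not-entirely-obvious fact that $C_1, C_2$ are convex even though each is cut out by a globally non-convex quadratic inequality; whereas case~(\ref{itm:prop_max}) hinges on the observation that the sign hypothesis lets the non-polyhedral region $\{f \ge 0\}$ be replaced by the half-spaces $H_j$ without changing the feasible integer points, which is exactly what reduces matters to the purely polyhedral Theorem~\ref{thm:convex-concave}(\ref{itm:quasi-concave}).
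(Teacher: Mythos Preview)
Your argument is correct and mirrors the paper's: split along the sign of $S_n\cdot\ve x$, invoke Lemma~\ref{lem:f_quasiconvex}, and reduce case~(\ref{itm:prop_min}) to Theorem~\ref{thm:convex-concave}(\ref{itm:quasi-convex-semi-algebraic}) and case~(\ref{itm:prop_max}) to Theorem~\ref{thm:convex-concave}(\ref{itm:quasi-concave}) via the inclusion $(P\cap H_j)_I\subseteq C_j$. The only substantive difference is how you \emph{detect} whether case~(\ref{itm:prop_max}) applies: you invoke the FPTAS of Theorem~\ref{thm:main}(\ref{itm:main_max}) with $\epsilon=\tfrac12$ and exploit integrality of $f$, whereas the paper instead enumerates the vertices of the integer hulls $(P\cap H_j)_I$ via Theorem~\ref{thm:convex-concave}(\ref{itm:vertices-integer-hull}) and simply checks whether any vertex has $f<0$; this is a more self-contained certificate that keeps Proposition~\ref{prop:main} independent of the FPTAS machinery, while your route is shorter but leans on the heavier Theorem~\ref{thm:main}. (One minor slip: the cones $C_j$ in case~(\ref{itm:prop_min}) are not bounded --- only $P\cap C_j$ is, which is what Theorem~\ref{thm:convex-concave}(\ref{itm:quasi-convex-semi-algebraic}) actually requires.)
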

\begin{proof}
We suppose that $P \cap \Z^n \neq \emptyset$.
As mentioned before, we can determine the inertia of $Q$ in polynomial time.

Suppose $Q$ has exactly one negative eigenvalue.  Consider the sets $C_1$ and $C_2$ as in Lemma~\ref{lem:f_quasiconvex}.  Since $f$ is quasi-convex on each of those sets, we can minimize $f$ over $C_1 \cap P \cap \Z^n$ and $C_2 \cap P \cap \Z^n$ in polynomial time by  Theorem~\ref{thm:convex-concave} (\ref{itm:quasi-convex-semi-algebraic}).  As $C_1 \cup C_2 = \{ \ve x \in \R^n : f(\ve x) \leq 0\}$, we have shown how to detect if $f(\ve x_{\opt}) \leq 0$ and if so, determine an optimal solution.

Suppose instead that $Q$ has exactly one positive eigenvalue.  We consider the decomposition in Lemma~\ref{lem:f_quasiconvex} for $-Q = S (-D) S^T$, which has exactly one negative eigenvalue.
Since we decomposed $-Q$ instead of $Q$, the sets $C_1$, $C_2$ are 
$C_1 := \{ \ve x \in \R^n \,:\, f(\ve x) \geq 0,\ S_n \cdot \ve x \ge 0 \}$ and $C_2 := \{ \ve x \in \R^n \,:\, f(\ve x) \ge 0,\ S_n \cdot \ve x \le 0 \}$.
Now we compute the vertices of the integer hulls  $P^1_I := (P \cap \{ \ve x \in \R^n : S_n \cdot \ve x \geq 0\})_I$ and  $P^2_I := (P \cap \{ \ve x \in \R^n : S_n \cdot \ve x \leq 0\})_I$, which can be done in polynomial time by Theorem~\ref{thm:convex-concave} (\ref{itm:vertices-integer-hull}).
A standard convexity argument shows that $f(\ve x_{\min}) < 0$ if and only if there exists a vertex $\ve x_v$ of $P^1_I$ or $P^2_I$ such that $f(\ve x_v) < 0$.
Otherwise, all vertices of $P^1_I$ and $P^2_I$ are contained in $C_1 \cup C_2$. This certifies that $P\cap \Z^n \subseteq C_1 \cup C_2$, and hence $f(\ve x_{\opt}) \geq 0$.
If this happens, $f$ is quasi-concave on both $C_1$ and $C_2$. By Theorem~\ref{thm:convex-concave} (\ref{itm:quasi-concave}), we can compute an optimal solution in polynomial time.
\end{proof}

\bibliographystyle{abbrv}
\bibliography{references.bib}

\providecommand\CheckAccent[1]{\accent20 #1}
\begin{thebibliography}{10}

\bibitem{bochnak_real_1998}
J.~Bochnak, M.~Coste, and M.-F. Roy.
\newblock {\em Real Algebraic Geometry}.
\newblock Number~36 in Ergebnisse der Mathematik und ihrer Grenzgebiete / A
  Series of Modern Surveys in Mathematics. Springer Berlin Heidelberg, Jan.
  1998.

\bibitem{Boyd2004}
S.~Boyd and L.~Vandenberghe.
\newblock {\em Convex Optimization}.
\newblock Cambridge University Press, New York, NY, USA, 2004.

\bibitem{bunch_partial_1974}
J.~Bunch.
\newblock Partial {Pivoting} {Strategies} for {Symmetric} {Matrices}.
\newblock {\em SIAM Journal on Numerical Analysis}, 11(3):521--528, June 1974.

\bibitem{cambini_generalized_2009}
A.~Cambini and L.~Martein.
\newblock {\em Generalized Convexity and Optimization: Theory and
  Applications}, volume 616 of {\em Lecture Notes in Economics and Mathematical
  Systems}.
\newblock Springer Berlin Heidelberg, 2009.

\bibitem{cook-hartmann-kannan-mcdiarmid-1992}
W.~J. Cook, M.~E. Hartmann, R.~Kannan, and C.~McDiarmid.
\newblock On integer points in polyhedra.
\newblock {\em Combinatorica}, 12(1):27--37, 1992.

\bibitem{dax_pivoting_1977}
A.~Dax and S.~Kaniel.
\newblock Pivoting techniques for symmetric {Gaussian} elimination.
\newblock {\em Numerische Mathematik}, 28(2):221--241, June 1977.

\bibitem{loera_mixed_integer_2008}
J.~{De Loera}, R.~Hemmecke, M.~K{\"o}ppe, and R.~Weismantel.
\newblock {FPTAS} for optimizing polynomials over the mixed-integer points of
  polytopes in fixed dimension.
\newblock {\em Mathematical Programming}, 115(2):273--290, 2008.

\bibitem{loera_integer_2006}
J.~A. {De Loera}, R.~Hemmecke, M.~K{\"o}ppe, and R.~Weismantel.
\newblock Integer polynomial optimization in fixed dimension.
\newblock {\em Mathematics of Operations Research}, 31:147--153, 2006.

\bibitem{delpia_mixed_2014}
A.~Del~Pia, S.~S. Dey, and M.~Molinaro.
\newblock Mixed-integer quadratic programming is in {NP}.
\newblock Manuscript, 2014.

\bibitem{delpia_minimizing_2015}
A.~Del~Pia, R.~Hildebrand, R.~Weismantel, and K.~Zemmer.
\newblock Minimizing cubic and homogeneous polynomials over integers in the
  plane.
\newblock {\em Mathematics of Operations Research}, 2015.
\newblock accepted for publication.

\bibitem{delpia_integer_2013}
A.~{Del Pia} and R.~Weismantel.
\newblock Integer quadratic programming in the plane.
\newblock In {\em Proceedings of SODA 2014}, pages 840--846, 2014.

\bibitem{gill_numerical_1991}
P.~E. Gill, W.~Murray, and M.~H. Wright.
\newblock {\em Numerical Linear Algebra and Optimization}, volume~1.
\newblock Adison-Wesley Publishing Company, Redwood City, CA, 1991.

\bibitem{golub_matrix_1996}
G.~H. Golub and C.~F.~V. Loan.
\newblock {\em Matrix {Computations}}.
\newblock JHU Press, 3 edition, Oct. 1996.

\bibitem{Grotschel1988}
M.~Gr{\"o}tschel, L.~Lov{\'a}sz, and A.~Schrijver.
\newblock {\em Geometric Algorithms and Combinatorial Optimization}.
\newblock Springer-Verlag Berlin Heidelberg, 1988.

\bibitem{handelman1988}
D.~Handelman.
\newblock Representing polynomials by positive linear functions on compact
  convex polyhedra.
\newblock {\em Pacific J. Math.}, 132(1):35--62, 1988.

\bibitem{hartung_computational_2007}
R.~Hartung.
\newblock {\em Computational {Problems} of {Quadratic} {Forms}: {Complexity}
  and {Cryptographic} {Perspectives}}.
\newblock PhD thesis, Johann Wolfgang Goethe-Univeristät, Frankfurt am Main,
  2007.

\bibitem{Jeronimo2010}
G.~Jeronimo and D.~Perrucci.
\newblock On the minimum of a positive polynomial over the standard simplex.
\newblock {\em Journal of Symbolic Computation}, 45(4):434 -- 442, 2010.

\bibitem{khackiyan_integer_2000}
L.~Khachiyan and L.~Porkolab.
\newblock Integer optimization on convex semialgebraic sets.
\newblock {\em Discrete and Computational Geometry}, 23(2):207--224, 2000.

\bibitem{lenstra_integer_1983}
H.~W. Lenstra, Jr.
\newblock Integer programming with a fixed number of variables.
\newblock {\em Mathematics of Operations Research}, 8:538--548, 1983.

\bibitem{mansour_complexity_1989}
Y.~Mansour, B.~Schieber, and P.~Tiwari.
\newblock The complexity of approximating the square root.
\newblock In {\em 30th Annual Symposium on Foundations of Computer Science,
  1989}, pages 325--330, Oct. 1989.

\bibitem{Oertel2014}
T.~Oertel, C.~Wagner, and R.~Weismantel.
\newblock Integer convex minimization by mixed integer linear optimization.
\newblock {\em Operations Research Letters}, 42(6--7):424 -- 428, 2014.

\bibitem{papadimitriou_approximability_2000}
C.~H. Papadimitriou and M.~Yannakakis.
\newblock On the approximability of trade-offs and optimal access of web
  sources.
\newblock In {\em Proceedings of the 41st Annual Symposium on Foundations of
  Computer Science}, FOCS '00, pages 86--92, Washington, DC, USA, 2000. IEEE
  Computer Society.

\bibitem{parrilo_polynomial_2012}
P.~Parrilo.
\newblock {Polynomial} {Optimization}, {Sums} of {Squares}, and {Applications}.
\newblock In {\em Semidefinite {Optimization} and {Convex} {Algebraic}
  {Geometry}}, {MOS}-{SIAM} {Series} on {Optimization}, pages 47--157. Society
  for Industrial and Applied Mathematics, Dec. 2012.

\bibitem{Powers2001}
V.~Powers and B.~Reznick.
\newblock A new bound for {Pólya}'s {Theorem} with applications to polynomials
  positive on polyhedra.
\newblock {\em Journal of Pure and Applied Algebra}, 164(1–2):221--229, Oct.
  2001.

\bibitem{schrijver_theory_1986}
A.~Schrijver.
\newblock {\em Theory of Linear and Integer Programming}.
\newblock John Wiley and Sons, New York, 1986.

\bibitem{sleumer_output_1998}
N.~Sleumer.
\newblock Output-sensitive cell enumeration in hyperplane arrangements.
\newblock In S.~Arnborg and L.~Ivansson, editors, {\em Algorithm Theory --
  SWAT'98}, volume 1432 of {\em Lecture Notes in Computer Science}, pages
  300--309. Springer Berlin Heidelberg, 1998.

\bibitem{vavasis_approximation_1992}
S.~A. Vavasis.
\newblock Approximation algorithms for indefinite quadratic programming.
\newblock {\em Mathematical Programming}, 57(1-3):279--311, May 1992.

\end{thebibliography}

\end{document}